\numberwithin{equation}{section}
\theoremstyle{definition}
\newtheorem{theorem}[equation]{Teorema}
\newtheorem{lemma}[equation]{Lema}
\newtheorem{proposition}[equation]{Proposi\c c\~ao}
\newtheorem{remark}[equation]{Observa\c c\~ao}
\newtheorem{corollary}[equation]{Corol\'ario}
\newtheorem{definition}[equation]{Defini\c c\~ao}
\numberwithin{equation}{section}
\theoremstyle{definition}
\newtheorem{example}[equation]{Exemplo}
\newcommand{\Z}{\mathbb{Z}}
\newcommand{\R}{\mathbb{R}}
\newcommand{\Sp}{\mathbf{S}}
\newcommand{\T}{\mathbf{T}}
\newcommand{\PP}{\mathbb{P}}
\newcommand{\Su}{\mathcal{S}}
\begin{document}


\title[Teorema de Poincar\'e-Hopf] {Teorema de Poincar\'e-Hopf}

\author{Jean-Paul \textsc{Brasselet} e Nguy\~{\^e}n Th\d{i} B\'ich Th\h{u}y}
\address{CNRS I2M, Aix-Marseille Universit\'e, Marseille, France.}
\email{jean-paul.brasselet@univ-amu.fr}

\address{UNESP, Universidade Estadual Paulista, ``J\'ulio de Mesquita Filho'', S\~ao Jos\'e do Rio Preto, Brasil}
\email{bich.thuy@unesp.br}

\thanks{Este artigo \'e uma vers\~ao extendida dum curso providenciado duranta a semana XXVIII SEMAT Semana de Matem\'atica, 17 a 21 de Outobro de 2016. O primerio autor tinha ajuda duma bolsa UNESP-FAPESP no. 2015/06697-9}
\bigskip




\begin{abstract}

O Teorema de Poincar\'e-Hopf \'e um dos resultados da mat\'ematica mais usados n\~ao somente 
em mat\'ematica  mas tamb\'em em outras \'areas da ci\^encia. 
H\'a aplica\c c\~oes do Teorema de Poincar\'e-Hopf
em f\'isica, qu\'imica, biologia e mesmo em economia, psicologia, etc.

O Teorema de Poincar\'e-Hopf liga um invariante da combinat\'oria : a caracter\'\i stica de Euler-Poincar\'e 
com objetos da geometria diferencial, a saber \'indices de campos de vetores. 
Os resultados que ligam assim duas \'areas bem diferentes da matem\'atica podem ser considerados
como os mais bonitos, \'uteis e fecundos. 
\end{abstract}

\maketitle

\section{Introdu\c c\~ao}

O Teorema de Poincar\'e-Hopf \'e bem conhecido, mas sua demonstra\c c\~ao nem tanto. 
O objetivo desta apresenta\c c\~ao \'e introduzir a prova, ou ao menos, uma ideia da prova 
do Teorema de Poincar\'e-Hopf no caso de superf\'\i cies, assim como algumas consequ\^encias.

Depois de lembrar o cl\'assico Teorema  de Poincar\'e-Hopf,  
introduzimos as no\c c\~oes involvidas no Teorema: a caracter\'\i stica de Euler-Poincar\'e 
e o \'indice de um campo de vetores em uma singularidade isolada. 

Nas se\c c\~oes seguintes, damos defini\c c\~oes equivalentes, no caso diferenci\'avel,
o que permite de providenciar a prova do Teorema de Poincar\'e-Hopf. 
Em primeiro lugar, lembramos a no\c c\~ao de fibrado tangente a uma  
curva $\mathcal C$ suave e fechada em $\R^2$ (\S \ref{tangcirc}),  
assim como as no\c c\~oes de 
valores e de pontos regulares e cr\'iticos e de grau de uma aplica\c c\~ao suave $f$ 
de $\mathcal{C}$ em $\Sp^1$,  onde $\Sp^1$ \'e a circunfer\^encia unit\'aria.

Estas no\c c\~oes generalizam-se no caso de uma superf\'icie suave $\mathcal{S}$, 
em particular a no\c c\~ao de 
fibrado tangente a uma superf\'icie suave (\S \ref{2.2}). Definimos as no\c c\~oes de 
valores e pontos regulares e cr\'iticos e de grau de uma aplica\c c\~ao suave $f$ 
de $\mathcal{S}$ em $\Sp^2$ (\S \ref{grau2}).
Isso nos permite de estudar as singularidades de campos de vetores tangentes 
a uma superf\'icie suave. Providenciamos v\'arios exemplos (assim como desenhos
correspondentes). O \' indice de um campo $v$ de vetores tangentes a uma superf\'icie, em um ponto
singular isolado, definise-se na se\c c\~ao \S \ref{indice1} 
como o grau de uma aplica\c c\~ao suave $\gamma_v: \Sp^1 \to \Sp^1$ definido na se\c c\~ao  \S \ref{grau}.

Passando na dimens\~ao superior, em $\R^3$, o \' indice de um campo $v$ de vetores em $\R^3$
em um ponto singular isolado definise-se como grau de uma aplica\c c\~ao suave $\gamma_v : \Sp^2 \to \Sp^2$,
isto \'e o grau definido na se\c c\~ao precedente (\S \ref{grau2}). Mostramos neste ponto 
um resultado t\'ecnico que ser\'a usado na prova do Teorema de Poincar\'e-Hopf (Lemma \ref{extension}). 

Na se\c c\~ao seguinte enunciamos o Teorema de Poincar\'e-Hopf e, antes de providenciar a prova, 
damos v\'arios exemplos nos casos orientado e n\~ao orientado. Em fim, damos a prova mesma primeiramente 
no caso orientado, em duas etapas. A primeira etapa \'e bem geometrica. Descrevemos a constru\c c\~ao de um campo
de vetores particulares, chamado o campo de Hopf, satisfazendo o Teorema de Poincar\'e-Hopf. A segunda etapa 
\'e um pouco mais t\'ecnica, mas j\'a providenciamos todos ingredientes para mostrar que, em uma superf\'icie 
compacta orientada, a soma dos \' indices de um campo de vetores tangentes \`a 
superf\'icie n\~ ao depende do campo escolhido. Isso termine a prova no caso orientado. 
Concluimos dando a prova do Teorema de Poincar\'e-Hopf no caso n\~ao orientado. 

\section{Enunciado do Teorema de Poincar\'e-Hopf}

O Teorema de Poincar\'e-Hopf foi provado por Poincar\'e no ano 1885 
no caso de superf\'icies, depois  por Hopf, no ano 1927, para o caso de variedades suaves 
de dimens\~ao maior. 

\begin{theorem}
{\it Seja $v$ um campo de vetores com singularidades isoladas, tangente a uma superf\'icie $\Su$ compacta, suave, sem borda e n\~ao necessariamente orientada. Temos 
$$\sum_{a_i} I(v, a_i) = \chi(\Su),$$
onde os pontos $a_i$ s\~ao os pontos singulares de $v$. }
\end{theorem}

Nas duas primeiras se\c c\~oes, lembraremos a defini\c c\~ao dos ingredientes 
do Teorema que s\~ao a caracter\'\i stica de Euler-Poincar\'e e  
 o \'indice de um campo de vetores em uma singularidade isolada.
Nas se\c c\~oes seguintes daremos defini\c c\~oes equivalentes, no caso diferenci\'avel,
o que permite de providenciar a prova do Teorema de Poincar\'e-Hopf. 

\section{Caracter\'\i stica de Euler-Poincar\'e}

Historicamente,  a ``caracter\'\i stica de Euler-Poincar\'e'' de uma superf\'icie 
foi definida usando triangula\c c\~oes. 
Exemplos de triangula\c c\~oes da esfera $\Sp^2$ s\~ao dados na Figura \ref{triangsphere}.

 Uma triangula\c c\~ao de uma superf\'icie ${\mathcal S}$ \'e o dado de um poliedro (triangulado) $K\subset \R^n$ e de um homeomorfismo $h:\vert K \vert \to \Su$, onde $\vert K \vert$ \'e o 
subespa\c co topol\'ogico compacto de $\R^n$, 
que \'e a uni\~ ao dos simplexos do poliedro $K$.

Seja $K$ uma triangula\c c\~ao de uma superf\'icie compacta $\Su$, 
com  n\'umeros $n_0$ de v\'ertices, $n_1$ de segmentos e $n_2$ de tri\^angulos, 
a caracter\'\i stica de Euler-Poincar\'e de $K$ \'e definida por
\begin{equation}\label{Euler}
\chi(K)=n_0 - n_1 + n_2.
\end{equation}

No caso da esfera $\Sp^2$ e na Figura \ref{triangsphere}, e pela triangula\c c\~ao do cubo, temos 
$$n_0 - n_1 + n_2 = 8 - 18 + 12 =2.$$
Pela triangula\c c\~ao do  tetraedro, temos 
$$n_0 - n_1 + n_2 = 4 -6 + 4 =2.$$

H\'a v\'arias demonstra\c c\~oes algumas discut\'iveis, 
em particular por Descartes (1639) e Cauchy (1811), de que, no caso da esfera,  
a caracter\'\i stica de Euler-Poincar\'e
n\~ao depende da triangula\c c\~ao $K$ escolhida (veja os 
artigos de Elon Lima \cite{Li1,Li2}, tamb\'em \cite{BN} e o site web \cite{Epp}). 
Assim  podemos denotar por  $\chi(\Sp^2)$ e chamar de 
caracter\'\i stica de Euler-Poincar\'e da esfera a quantitade $\chi(K)$ para qualquer 
triangula\c c\~ao da esfera. 
Assim temos 
$$
\chi(\Sp^2)=n_0 - n_1 + n_2=2, 
$$
qualquer que seja a triangula\c c\~ao da esfera.
\\
A invari\^ancia da triangula\c c\~ao vale para qualquer superf\'icie compacta $\Su$ e podemos 
definir $\chi(\Su)$ como sendo $\chi(K)$ para qualquer triangula\c c\~ao da superf\'icie. 

\begin{center}
\begin{figure}

\begin{tikzpicture}[scale=1.05]

 \draw[very thick] (0,0) -- (4,0) -- (4,1) -- (0,1) -- (0,0);
\draw[very thick] (1,-1) -- (2,-1) -- (2,2) -- (1,2) -- (1,-1);
\draw[very thick] (3,0) -- (3,1) ;
\draw[very thick] (0,0) -- (2,2) ;
\draw[very thick] (1,0) -- (2,1) ;
\draw[very thick] (1,-1) -- (3,1) ;
\draw[very thick] (3,0) -- (4,1) ;

 \draw[->,very thick] (0,0) -- (0,0.5);
 \node at (0,0.5) [left]{$d$};
  \draw[->,very thick] (0,1) -- (0.5,1);
  \node at (0.5,1) [above] {$c$};
  \draw[->,very thick] (1,2) -- (1,1.5);
  \node at (1,1.5) [left] {$c$};
  \draw[->,very thick] (2,2) -- (1.5,2);
  \node at (1.5,2) [above] {$b$};
  \draw[->,very thick] (2,1) -- (2,1.5);
  \node at ((2,1.5) [right] {$a$};
   \draw[->,very thick] (2,1) -- (2.5,1);
  \node at (2.5,1) [above] {$a$};
  \draw[->,very thick] (3,1) -- (3.5,1);
  \node at (3.5,1) [above] {$b$};
  \draw[->,very thick] (4,0) -- (4,0.5);
  \node at (4,0.5) [right] {$d$};
  \draw[->,very thick] (3,0) -- (3.5,0);
  \node at (3.5,0) [below] {$f$};    
     \draw[->,very thick] (2,0) -- (2.5,0);
  \node at (2.5,0) [below] {$g$};
  \draw[->,very thick] (2,0) -- (2,-0.5);
  \node at (2,-0.5) [right] {$g$};
  \draw[->,very thick] (2,-1) -- (1.5,-1);
   \node at (1.5,-1) [below] {$f$};
    \draw[->,very thick] (1,-1) -- (1,-0.5);
   \node at (1,-0.5) [left] {$e$};
\draw[->,very thick] (0,0) -- (0.5,0);
   \node at (0.5,0) [below] {$e$};

  \draw[->,very thick] (5,0.5) -- (6,0.5);

\draw[very thick] (7,0) -- (8,0) -- (8,1) -- (7,1) -- (7,0) -- (8,1);
\draw[very thick] (7,1) -- (7.4,1.2) -- (8.4,1.2)  -- (8.4,0.2) -- (8,0) -- (8.4,1.2) -- (8,1);
 \draw[very thick] (7,1) -- (8.4,1.2);
 \draw[dotted,very thick] (7,0) -- (7.4,0.2) -- (7.4,1.2) ;
 \draw[dotted,very thick] (7.4,0.2) -- (8.4,0.2) ;

\draw[very thick] (0.5,-3) -- (2.5,-3) -- (1.5,-4.73) -- (0.5,-3);
\draw[very thick] (1.5,-3) -- (1,-3.86) -- (2,-3.86) -- (1.5,-3);

    \draw[->,very thick] (0.5,-3) -- (1,-3);
   \node at (1,-3) [above] {$c$};    
   \draw[->,very thick] (2.5,-3) -- (2,-3);
   \node at (2,-3) [above] {$c$};
  \draw[->,very thick] (0.5,-3) -- (0.75,-3.43);
   \node at (0.75,-3.43) [left] {$a$};    
   \draw[->,very thick] (1.5,-4.73)-- (1.25,-4.3);
   \node at (1.25,-4.3) [left] {$a$};
  \draw[->,very thick] (1.5,-4.73)-- (1.75,-4.3);
   \node at (1.75,-4.3) [right] {$b$};    
   \draw[->,very thick] (2.5,-3) --  (2.25,-3.43);
   \node at (2.25,-3.43) [right] {$b$};

  \draw[->,very thick] (4.5,-3.5) -- (5.5,-3.5);
  
\draw[very thick] (7.4,-3) -- (8,-4) -- (7,-4.4) -- (7.4,-3) -- (6.4,-4) --(7,-4.4);
   \draw[dotted,very thick] (6.4,-4) -- (8,-4) ;
   
 \draw [very thick](11.5, -1.7) circle (0.75);
 
 \draw[dotted,very thick] (12.25, -1.7) arc (0:180:0.75 and 0.3);
\draw[very thick]  (10.75, -1.7) arc (180: 360:0.75 and 0.3);
   
    \draw [->,very thick] (9.8,0.4) arc (60:25:2.5);
     \draw [->,very thick] (9.2,-3.6) arc (270:320:2);
     
   \node at (10,-0.2) [right] {$h$};
      \node at (9.8,-3.2) [right] {$h$};    
\end{tikzpicture}
\caption{Triangula\c c\~oes da esfera.  Nas representa\c c\~oes  planares
identificam-se os  segmentos de mesmo nome com a mesma orienta\c c\~ao.}\label{triangsphere}
\end{figure}
\end{center}

A caracter\'\i stica de Euler-Poincar\'e do toro vale $0$. 
Em geral, a caracter\'istica de Euler-Poincar\'e de uma superf\'icie compacta, sem borda e orient\'avel 
\'e $2 -2g$, onde  $g$ \'e o  g\^enero da superf\'icie. 
Lembramos que o {\it g\^enero} $g$ de uma superf\'icie (orient\'avel ou n\~ao) 
\'e o n\'umero m\'aximo de circunfer\^encias que se pode desenhar sobre  a superf\'icie sem desconect\'a-la.
 Por exemplo, o g\^enero de uma  esfera \'e $0$, pois  qualquer circunfer\^encia desenhada sobre 
 a esfera a desconecta. O g\^enero de um toro \'e $1$, pois \'e poss\'ivel desenhar uma circunfer\^encia 
 sobre o toro sem desconect\'a-lo, mas qualquer 
segunda circunfer\^encia desenhada sobre o toro o desconecta (figura \ref{genero}).

\begin{center}
\begin{figure}[h!]
\begin{tikzpicture} [scale=0.85]

\draw (-7,0) circle (2cm);
\draw[dotted, thick]  (-5cm,-0cm) arc (0:60:1) arc (60:90:3) arc (90:120:3) arc (120:180:1);
\draw  (-5cm,-0cm) arc (0:-50:1) arc (-50:-90:2.55) arc (-90:-130:2.55) arc (-130:-180:1);
\draw[red] (-6, 0.5) ellipse (10 pt and 10 pt);

\draw (0.2cm,-2cm) arc (-75:0:2.0) arc (0:180:2.5) arc (0.2:80:-2);
\draw   (0.08 cm,-1cm) arc (-75:0:1) arc (0:180:1.5) arc (0.1:75:-1);
\draw [red](0.2cm,-2cm) arc (-75:285:0.5);
\draw [red] (-1.8cm,-2cm) arc (-105:80:0.53);
\draw[red, very thick, dotted] (-1.7cm,-1cm) arc (90:200:0.33) arc (170:275:0.45);
\draw[red] (1.2,0) ellipse (5pt and 10pt);
\end{tikzpicture} 
\caption{G\^enero da esfera e do toro.} \label{genero}
\end{figure}
\end{center}

O plano projetivo ${\mathbb{P}}^2$ \'e o conjunto de todas  as retas do espa{\c c}o euclidiano $\R^3$ passando pela origem. Uma maneira f\'acil para representar o espa{\c c}o projetivo \'e considerar em $\R^3$ a esfera $\Sp^2$ centrada na origem e de raio 1. Consideramos tr\^es partes na esfera $\Sp^2$: as 
semiesferas (abertas) norte e sul e o equador (veja figura \ref{figura17}).

\begin{figure}[h]
\begin{tikzpicture} 
\draw[thick] (2,2) circle (2cm);
\draw[dotted, thick]  (4cm, 2cm) arc (0:60:1) arc (60:90:3) arc (90:120:3) arc (120:180:1);
\draw  (4cm,2cm) arc (0:-50:1) arc (-50:-90:2.55) arc (-90:-130:2.55) arc (-130:-180:1);

\draw[thick, dashed] (2,2) --(2,4);
\draw[thick, ->] (2,4) -- (2,5);
\node at (2, 5)[above] {$z$};

\draw[thick, dashed] (2,2) -- (4,2);
\draw[thick, ->] (4,2) -- (5,2);
\node at (5,2)[right] {$y$};

\draw[very thick,red] (2,2) -- (4,4);
\node at (3.2,3.2) {$\bullet$};
\node at (3.22,3.15)[below] {$x_N$};
\node at (0.8,0.8) {$\bullet$};
\node at (0.92,0.8)[below] {$x_S$};
\draw[very thick, red,dashed] (2,2) -- (0.6,0.6);
\draw[very thick,red] (0.6,0.6) -- (0,0);

\draw[thick, dashed] (2,2) -- (1.4, 0.8);
\draw[->, thick] (1.4, 0.8) -- (0.5, -1);

\node at  (0.5, -1)[below] {$x$};

\node at (2,2) {$\bullet$};
\node at (2,2)[below] {0};

\end{tikzpicture} 
\caption{Representa{\c c}\~ao do espa\c co projetivo ${\mathbb{P}}^2$.}\label{figura17}
\end{figure}

Cada reta de $\R^3$ passando pela origem  e n\~ao contida no plano $(0xy)$ 
encontra as semiesferas norte em um ponto $x_N$ e sul em um ponto $x_S$.
O ponto $x_N$ pode ser considerado como um representante da reta. 
As retas contidas no plano $(0xy)$ encontram o equador em dois pontos diametralmente 
opostos, que temos que identificar para ter um representante s\'o. 
Assim obtemos uma representa{\c c}\~ao do plano projetivo 
${\mathbb{P}}^2$, como sendo a semiesfera norte 
(ou seja um disco) com identifica\c c\~ao de pontos diametralmente opostos na sua  borda. 

A caracter\'\i stica de Euler-Poincar\'e do plano projetivo vale $+1$, da  garrafa de Klein vale $0$. 
Em geral, a caracter\'\i stica de Euler-Poincar\'e de uma superf\'icie compacta e sem borda, n\~ao 
orient\'avel, de g\^enero $k$, vale $2-k$. 

Lembramos tamb\'em que uma superf\'icie compacta e sem borda pode ser mergulhada em $\R^3$ se, e somente se, ela \'e orient\'avel.


\section{\'Indice de um campo de vetores no plano}
\subsection{\'Indice de uma curva plana fechada }

Seja $\Gamma$ uma curva plana fechada e orientada no plano orientado $\R^2$.  
Seja $\alpha$ um ponto n\~ao situada na curva $\Gamma$. Consideramos um pequena circunfer\^encia $C_\alpha$ 
de centro $\alpha$ tal que $\Gamma \cap C_\alpha = \emptyset$, orientada com orienta\c c\~ao 
induzida por $\R^2$. 

Seja $y$ um ponto na curva $\Gamma$ e $\alpha(y)$ o ponto interse\c c\~ao $\alpha(y)= [\alpha,y] \cap C_\alpha$. 
Quando $y$ caminha sobre a curva $\Gamma$ e descreve toda a curva no sentido (positivo) da curva, 
o ponto $\alpha(y)$ faz voltas sobre  $C_a$ no sentido positivo ou negativo. Seja $p_\alpha$ o n\'umero de voltas no sentido positivo e $q_\alpha$ o n\'umero de voltas no sentido negativo.
\begin{definition}
O \'indice $I(\Gamma, \alpha)$ da curva $a$ relativamente ao ponto $\alpha$ \'e a diferen\c ca
$$I(\Gamma, \alpha) = p_\alpha - q_\alpha.$$
\end{definition}

\begin{figure}[h]
\begin{center}
\begin{tikzpicture}[scale=0.5]

  \draw [->>,ultra thick] (-6,3) arc (90:180:3);
    \draw [ultra thick] (-9,0) arc (180:270:3);
   \draw[->>,ultra thick]  (-1.75,4.25) arc (45:270:6);
   \draw [ultra thick] (-6,-6) arc (-90:0:3);
    \draw [ultra thick] (-6,-6) arc (-90:0:3);
    \draw [->>,ultra thick] (-2.2,2) arc (45:235:1.3);
    
   \draw[ultra thick] 
  (-6,3) 
    .. controls (-3,2.5) and (-2.5,1) .. 
  (-3,-3); 
  
   \draw[ultra thick] 
  (-2.2,2) 
    .. controls (1,-1) and (-4,-3) .. 
  (-6,-3); 

 \draw[ultra thick] 
  (-1.75,4.25) 
    .. controls (1,1) and (-2.5,-1) .. 
  (-3.85,0); 

 \draw[red] (-6, 0) circle (1);
 \draw (-6,0) -- (-4,3);
 \node at (-6,0) [below,left] {$\alpha$};
  \node at (-8,-4) [below,left]  {$\beta$};
     \node at (-5.45,0.82) [above]{$\alpha(y)$};
 \node at (-3,-5){$\Gamma$};
 \node at (-4.3,2.8){$y$};
 
  \node at (-4.3,2.5){$\bullet$};
    \node at (-5.45,0.82){$\bullet$};
  \node at (-6,0)   {$\bullet$};
   \node at (-8,-4)   {$\bullet$};
   
\end{tikzpicture}
\caption{$I(\Gamma,\alpha)=+2$; \quad $I(\Gamma,\beta)=-1$} 
\end{center}
\end{figure}

O n\'umero $ p_\alpha - q_\alpha$ n\~ao depende do ponto de partida $y$ sobre a curva $\Gamma$. Ele n\~ao depende tamb\'em do ponto $\alpha$ situado na mesma componente conexa aberta de $\R^2 \setminus \Gamma$. 
Mas, em uma outra componente conexa aberta de $\R^2 \setminus \Gamma$, o \'indice  $I(\Gamma, \beta)$   num ponto $\beta$ pertencente a esta componente 
pode ser diferente de $I(\Gamma, \alpha)$. 

\subsection{\'Indice de um campo de vetores planar em um ponto singular isolado. }

Consideramos agora uma circunfer\^encia $\Sp^1_a$ de centro $a$ em $\R^2$ e um campo cont\'inuo de vetores
 $v(x)$ definido sobre o disco fechado $D_a\subset \R^2$ cuja $\Sp^1_a$ \'e borda. 
Suponhamos que 
a \'unica singularidade  de $v$ no disco $D_a$ seja no ponto $a$. 
Isto \'e, $a$ \'e o \'unico ponto do disco $D_a$ tal que $v(a)$ = 0.

\begin{figure}[h]
\begin{tikzpicture} [scale=0.45]

 \draw[red] (0, 0) circle (6);
\node at (0,0) {$\bullet$};
\node at (0,0)[above, right] {$a$};

\node at (6,0) {$\bullet$};
\node at (6,0) [left] {\small $1$};

\path (1*10:6) coordinate (A1);
\node at (A1) {$\bullet$};
\node at (A1) [left] {\small $2$};
 \path (2*15:6) coordinate (A2);
 \node at (A2) {$\bullet$};
 \node at (A2) [left] {\small $3$};
  \path (7*7.5:6) coordinate (A4);
 \node at (A4) {$\bullet$};
 \node at (A4) [right] {\small $4$};
  \path (5*15:6) coordinate (A5);
 \node at (A5) {$\bullet$};
 \node at (A5) [above] {\small $5$};
   \path (6*15:6) coordinate (A6);
 \node at (A6) {$\bullet$};
\node at (A6) [above] {\small $6$};
    
  \draw[->] (6,0) -- (9,0); 
  \begin{scope}[shift={(A1)},rotate=60]; 
  \draw[->] (A1) -- (2.7,-0.1);
  \end{scope}
   \begin{scope}[shift={(A2)},rotate=100];
    \draw[-> ] (A2) -- (2.8,0);
  \end{scope}
   \begin{scope}[shift={(A4)},rotate=180]; 
    \draw[->] (A4) -- (2.8,0);
  \end{scope}
 \begin{scope}[shift={(A5)},rotate=230];
    \draw[->] (A5) -- (2.8,0);
  \end{scope}
  \begin{scope}[shift={(A6)},rotate=270];
    \draw[->] (A6) -- (2.8,0);
  \end{scope}


 \path (5*22.5:6) coordinate (A7);
 \node at (A7) {$\bullet$};
 \node at (A7) [above] {\small $7$};
 \path (6*22.5:6) coordinate (A8);
 \node at (A8) {$\bullet$};
  \node at (A8) [left] {\small $8$};
  \path (7*22.5:6) coordinate (A9);
 \node at (A9) {$\bullet$};
   \node at (A9) [left] {\small $9$};
  \path (8*22.5:6) coordinate (A10);
 \node at (A10) {$\bullet$};
    \node at (A10) [left] {\small $10$};
      \path (8.5*22.5:6) coordinate (A11);
 \node at (A11) {$\bullet$};
    \node at (A11) [left] {\small $11$};
  \path (9*22.5:6) coordinate (A12);
 \node at (A12) {$\bullet$};
   \node at (A12) [left] {\small $12$};
\path (9.7*22.5:6) coordinate (A13);
 \node at (A13) {$\bullet$};
  \node at (A13) [left] {\small $13$};
  \path (10.6*22.5:6) coordinate (A14);
 \node at (A14) {$\bullet$};
 \node at (A14) [below] {\small $14$};
  \path (11.3*22.5:6) coordinate (A15);
 \node at (A15) {$\bullet$};
  \node at (A15) [below] {\small $15$};
  
 \begin{scope}[shift={(A7)},rotate=305];
  \draw[-> ] (A7) -- (3.5,0);
  \end{scope}
   \begin{scope}[shift={(A8)},rotate=350];
    \draw[-> ] (A8) -- (5,0);
  \end{scope}
  \begin{scope}[shift={(A9)},rotate=390];
    \draw[-> ] (A9) -- (4.3,0);
  \end{scope}
 \begin{scope}[shift={(A10)},rotate=410];
    \draw[->] (A10) -- (2.5,0);
  \end{scope}
 \begin{scope}[shift={(A11)},rotate=390];
    \draw[-> ] (A11) -- (1.5,0);
  \end{scope}
  \begin{scope}[shift={(A12)},rotate=370];
    \draw[->] (A12) -- (1.7,0);
  \end{scope}
  \begin{scope}[shift={(A13)},rotate=360];
  \draw[->] (A13) -- (3.8,0);
  \end{scope}
   \begin{scope}[shift={(A14)},rotate=405];
    \draw[->] (A14) -- (4.7,0);
  \end{scope}
 \begin{scope}[shift={(A15)},rotate=445];
    \draw[-> ] (A15) -- (5.7,0);
  \end{scope}
  

\path (18*15:6) coordinate (A16);
\node at (A16) {$\bullet$};
 \node at (A16) [below] {\small $16$};
 \path (19*15:6) coordinate (A17);
\node at (A17) {$\bullet$};
 \node at (A17) [below] {\small $17$};
  \path (20.5*15:6) coordinate (A18);
 \node at (A18) {$\bullet$};
  \node at (A18) [above] {\small $18$};
  \path (21.7*15:6) coordinate (A19);
 \node at (A19) {$\bullet$};
   \node at (A19) [left] {\small $19$};
    \path (23*15:6) coordinate (A20);
 \node at (A20) {$\bullet$};
  \node at (A20) [left] {\small $20$};
 
\begin{scope}[shift={(A16)},rotate=120];
   \draw[->] (A16) -- (5.2,0);
 \end{scope}
  \begin{scope}[shift={(A17)},rotate=150];
  \draw[->] (A17) -- (5.5,0);
  \end{scope}
   \begin{scope}[shift={(A18)},rotate=220];
    \draw[-> ] (A18) -- (5,0);
  \end{scope}
  \begin{scope}[shift={(A19)},rotate=280];
    \draw[->] (A19) -- (4.5,0);
      \end{scope}
      \begin{scope}[shift={(A20)},rotate=320];
    \draw[-> ] (A20) -- (3.5,0);
  \end{scope}

\draw[->] (10,1) -- (12,1);
\node at (11,1) [above] {$\widetilde\gamma_v$};


\node at (20,0) {$\bullet$};
\node at (20,0)[above,left] {$0$};

  \draw [->>,ultra thick] (20,3) arc (90:135:3);
   \draw [ultra thick] (17.88,2.282) arc (135:180:3);
   
    \draw [ultra thick] (17,0) arc (180:270:3);
   \draw[->>,ultra thick]  (24.25,4.25) arc (45:270:6);
   \draw [ultra thick] (20,-6) arc (-90:0:3);
    \draw [ultra thick] (23.8,2) arc (45:235:1.3);

  \draw[ultra thick] 
  (20,3) 
    .. controls (23,3) and (23.5,1) .. 
  (23,-3); 
  
   \draw[ultra thick] 
  (23.8,2) 
    .. controls (27,-1) and (22,-3) .. 
  (20,-3); 

 \draw[ultra thick] 
  (24.25,4.25) 
    .. controls (27,1) and (23.5,-1) .. 
  (22.15,0); 

\node at (23.17,0)   {$\bullet$};
\node at (23.1,0.4) [right]  {$\widetilde 1$};
\node at (21.5,2.78)   {$\bullet$};
\node at (21.5,2.78) [above]  {$\widetilde 2$};
\node at (19,2.85)   {$\bullet$};
\node at (19,2.85)  [above]  {$\widetilde 3$};
\node at (17,0)   {$\bullet$};
\node at (17,0) [left]   {$\widetilde 4$};
\node at (18,-2.25)   {$\bullet$};
\node at (18,-2.25) [below]  {$\widetilde 5$};
\node at (20,-3)   {$\bullet$};
\node at (20,-3)  [below] {$\widetilde 6$};
\node at (22,-2.63)   {$\bullet$};
\node at (22,-2.63)   [below]  {$\widetilde 7$};
\node at (24.61,-0.8)   {$\bullet$};
\node at (24.61,-0.8)  [right] {$\widetilde 8$};
\node at (23.6,2.2)   {$\bullet$};
\node at (23.6,2.2) [above]  {$\widetilde 9$};
\node at (22,2.051)   {$\bullet$};
\node at (22,2) [above,left]  {$\widetilde {10}$};
\node at (21.625,0.8)   {$\bullet$};
\node at (21.625,0.8) [left]  {$\widetilde {11}$};
\node at (22,0.15)   {$\bullet$};
\node at (22,0.15)  [below] {$\widetilde {12}$};
\node at (24.2,0.12)   {$\bullet$};
\node at (24.2,0.12)  [above] {$\widetilde {13}$};
\node at (24.45,4)   {$\bullet$};
\node at (24.45,4)  [right] {$\widetilde {14}$};
\node at (20,6)   {$\bullet$};
\node at (20,6)  [above] {$\widetilde {15}$};
\node at (16.6,4.92)   {$\bullet$};
\node at (16.6,4.92) [above]  {$\widetilde {16}$};
\node at (14.65,2.7)   {$\bullet$};
\node at (14.65,2.7) [right]  {$\widetilde {17}$};
\node at (15.37,-3.8)   {$\bullet$};
\node at (15.37,-3.8)  [right] {$\widetilde {18}$};
\node at (21,-5.82)   {$\bullet$};
\node at (21,-5.82)  [above] {$\widetilde {19}$};
\node at (23,-3)   {$\bullet$};
\node at (23,-3) [right]  {$\widetilde {20}$};

\end{tikzpicture}
\caption{O campo de vetores $v$ e a imagem de $\Sp^1_a$ por 
$\widetilde \gamma_v$. Temos $I(v,0)=+2$. Para todo ponto $\small i$ de $\Sp^1_a$, o ponto 
$\widetilde \gamma_v ({\small i})$ \'e denotado por $\widetilde \imath$. 
O vetor $
\widearrow{0 \, \widetilde{\imath}}$ \'e  paralelo ao vetor $v({\small i})$. }
\end{figure}

Podemos definir uma aplica\c c\~ao 
$\widetilde \gamma_v$ de $\Sp^1_a$ em uma outra  c\'opia de $\R^2$ denotado por $\widetilde \R^2$
tal que para cada $x\in \Sp^1_a$, o ponto $\widetilde \gamma_v(x)$ seja a meta do vetor
$\widetilde w(x)$ paralelo \`a $v(x)$ e de origem \`a origem $0$ de $\widetilde \R^2$.

A imagem de $\Sp^1_a$ por $\widetilde \gamma_v$ \'e uma curva $\widetilde \Gamma = \widetilde \gamma_v  
(\Sp^1_a)$. Observamos que, como $v$ n\~ao se anula sobre $\Sp^1_a$, a curva 
$\widetilde \Gamma$ n\~ao cont\'em a origem $0$ de $\widetilde \R^2$.

\begin{definition}\label{naosei}
O \'indice $I(v,a)$ \'e definido como o \'indice 
da curva $\widetilde \Gamma = \widetilde \gamma_v 
(\Sp^1_a)$ relativamente ao ponto $0$,  isto \'e, $I(v, a) = I(\widetilde \Gamma,0)$.
\end{definition} 

\section{Grau de uma aplica\c c\~ao suave}
\subsection{Caso de curvas} \label{tangcirc}
\subsubsection{Fibrado tangente a uma curva suave e fechada}

Seja $\mathcal{C}$ uma curva suave e fechada em $\R^2$. 
 Em cada ponto $x$ de $\mathcal{C}$, temos a reta tangente \`a $\mathcal{C}$ no ponto $x$, 
que denotamos por $T_x\mathcal{C}$.  Vemos que $T_x\mathcal{C}$ \'e um espa{\c c}o vetorial de dimens\~ao 1 sobre $\R$. 

Um vetor tangente \`a curva suave  $\mathcal{C}$ no ponto $x$ \'e um elemento $v(x)$ de $T_x\mathcal{C}$. 
Podemos definir um vetor tangente como  a classe de equival\^encia de curvas diferenci\'aveis 
$c : \, \, ]-1,+1[ \to \mathcal C$  desenhadas sobre $\mathcal{C}$ e tais que $c(0)=x$. 
Temos $v(x) = c'(0)$. 

Denotamos $T \mathcal{C} = \bigcup_{x \in \mathcal{C}} T_x\mathcal{C}$ e chamamos $T\mathcal{C}$ 
de {\it espa{\c c}o fibrado tangente \`a} $\mathcal{C}$. Cada $T_x{\mathcal{C}}$ se chama {\it fibra} de $T\mathcal{C}$ no ponto $x$ e \'e  homeomorfa a $\R$. Chamamos $\R$ de {\it ``fibra tipo''}. 

O fibrado $T \mathcal{C}$ \'e localmente trivial, isso significa que para todo ponto $x$ de $\mathcal{C}$, existe uma vizinhan{\c c}a $U_x$ de $x$ em $\mathcal{C}$ tal que 
$$T \mathcal{C} \vert_{U_x} \cong U_x \times \R,$$
 onde $T \mathcal{C} \vert_{U_x} $ \'e a restri{\c c}\~ao de $T \mathcal{C}$ a $U_x$. 

De fato, o fibrado tangente \`a $\mathcal{C}$ \'e 
globalmente trivial ($T\mathcal{C} \cong \mathcal{C} \times \R$), mas veremos que esta propriedade 
  n\~ao \'e verdadeira em geral. Por exemplo, o fibrado tangente \`a esfera ${\mathbb{S}}^2$
n\~ao \'e globalmente trivial (veja a Se\c c\~ao (\ref{2.2})).

O fibrado $T \mathcal{C}$ tem uma vizualiza{\c c}\~ao natural no plano $\R^2$ 
mas esta vizualiza{\c c}\~ao n\~ao \'e  conveniente. Uma maneira mais conveniente \'e 
olhar o plano $\R^2$ como plano horizontal em $\R^3$ e girar todas as retas tangentes  em
 $+ 90^{\rm o}$, verticalmente em rela{\c c}\~ao ao plano $\R^2$ 
(veja Figura \ref{figura22}).

\begin{figure}[h]
\begin{tikzpicture} 


\draw (0,0) circle (2cm);

\draw[->] (-4,2) -- (3,2);
\node at (0,2) {$\bullet$};
\node at (0,2)[above] {$y$};
\draw[->, red, thick] (0,2) -- (2,2);
\node at (2,2)[above, red] {$v(y)$};

\draw[->] (-3,0.5) -- (1,3);
\node at (1.4,0.8)[above] {$\mathcal{C}$};

\draw[->] (-1.2,-2.5) -- (-3.35,3);
\node at (-1.95,-0.5) {$\bullet$};
\node at (-1.95,-0.5)[left] {$x$};
\draw[->, red, thick] (-1.95,-0.5) -- (-2.7,1.4);
\node at (-2.7,1.4)[left, red] {$v(x)$};
\node at (-1.4,-2)[left] {$T_x\mathcal{C}$};

\draw[<-] (0.6,-2.5) -- (3.2,0.8);
\node at (1.55,-1.3) {$\bullet$};
\node at (1.55,-1.3) [right] {$z$};
\draw[->, red, thick] (1.55,-1.3) -- (2.6,0);
\node at (2.5,0)[right, red] {$v(z)$};

\node at (3.7, 0) {$\rightarrow$};

\draw (4, -1.5) -- (5,1.5) -- (5.5, 1.5);
\draw[dotted, thick] (5.5, 1.5) -- (9.5, 1.5);
\draw (9.5, 1.5) --  (11, 1.5) -- (10, -1.5) -- (4, -1.5);
\draw[dotted,very thick] (9.5, 0.2) arc (0:185:2cm and 1cm);
\draw[very thick]  (5.5, 0) arc (-138: -91:2.6) arc (-91:-46: 2.9);

\node at (5.5, 0) {$\bullet$}; 
\node at (6, -0.4614) [left] {$x$}; ;\node at (6, -0.4614) {$\bullet$};
\node at (8, -0.8) {$\bullet$};
\node at (8.9, -0.45)[below] {$z$}; \node at (9, -0.45) {$\bullet$};
\node at (9.5, 0) {$\bullet$};

\node at (7.3, 1.2) {$\bullet$};
\node at (7.18, 1.2)[below] {$y$};

\draw[->,thick] (5.5, 0) -- (5.5, 3.5);
\draw[dotted, thick] (5.5, 0) -- (5.5, -1.5);
\draw[-,thick] (5.5, -1.5) -- (5.5, -3);

\draw[->,thick] (6, -0.4614) -- (6, 3.5);
\draw[dotted, thick] (6, -0.4614) -- (6, -1.5);
\draw[-,thick] (6, -1.5) -- (6, -3);
\draw[->, red, thick] (6, -0.4614) -- (6, 1.2);
\node at  (6, 1.2) [right, red] {$v(x)$};

\draw[->, thick] (7.3, 2.8) -- (7.3, 3.5);
\draw[dotted, thick] (7.3, 1.2) -- (7.3, -1.5);
\draw[-,thick] (7.3, -1.5) -- (7.3, -3);
\draw[->, red, thick] (7.3, 1.2) -- (7.3, 2.8);
\node at  (7.3, 2.8) [left, red] {$v(y)$};

\draw[->,thick] (8, -0.8) -- (8, 3.5);
\draw[dotted, thick] (8, -0.8) -- (8, -1.5);
\draw[-,thick] (8, -1.5) -- (8, -3);

\draw[->,thick] (9, -0.45) -- (9, 3.5);
\draw[-,thick] (9, -2.5) -- (9, -3);
\draw[-,red,dotted, thick] (9,-0.45) -- (9, -1.5);
\draw[->, red, thick] (9,-1.5) -- (9, -2.5);
\node at  (9, -2.5)  [left, red] {$v(z)$};

\draw[->,thick] (9.5, 0) -- (9.5, 3.5);
\draw[dotted, thick] (9.5, 3) -- (9.5, -1.5);
\draw[-,thick] (9.5, -1.5) -- (9.5, -3);

\end{tikzpicture} 
\caption{O fibrado $T \mathcal{C}$, aqui tomamos para $\mathcal{C}$ 
a circunfer\^encia $\Sp^1$.}\label{figura22}
\end{figure}

Obtemos assim uma representa\c c\~ao  de $T \mathcal{C}$ como um cilindro de base $\mathcal{C}$. 
A dire{\c c}\~ao positiva de  $T_x \mathcal{C}$ \'e para acima e a dire{\c c}\~ao  negativa \'e para abaixo. 
Assim, \'e poss\'ivel representar o vetor $v(x)$ positivo, negativo ou nulo. 

Seja $U$ um aberto de $\mathcal{C}$ (um segmento aberto), um campo $v$ de vetores tangentes
 \`a $\mathcal{C}$ ao longo de $U$ \'e uma aplica{\c c}\~ao cont\'inua $v : U \to T\mathcal{C} \vert_U$ tal que $v(x)$ pertence \`a $T_x(\mathcal{C})$,  para cada $x \in U$.

Uma singularidade do campo $v$ de vetores tangentes \`a $\mathcal{C}$ \'e um ponto $a$ tal que $v(a) = 0$. Uma singularidade $a$ ser\'a chamada de {\it singularidade isolada} se existir uma vizinhan{\c c}a 
$V_a$ do ponto $a$,   em $\mathcal{C}$, tal que $v(a) = 0$ e $v(x) \neq 0$, para 
$x \in V_a \setminus \{ a \}$. 
\medskip

\subsubsection{Pontos e valores cr\'iticos, grau} \label{grau} \hfill

Seja $f: \mathcal{C}  \to \Sp^1$ uma aplica\c c\~ao suave. 
Damos \`a  curva fechada $\mathcal C$ a orienta\c c\~ao induzida pela orienta\c c\~ao de $\R^2$. 
Consideramos a mesma orienta\c c\~ao no secundo exemplar $\widetilde\R^2$, assim temos para $\Sp^1$
 a orienta\c c\~ao induzida de $\widetilde\R^2$.
 
A  {\it derivada}  da aplica\c c\~ao suave 
$f: \mathcal{C}  \to \Sp^1$  em um ponto $x$ da curva $\mathcal{C}$, chamada de 
{\it aplica\c c\~ao linear tangente} e denotada por
\begin{equation} \label{derivee}
df_x : T_x\mathcal{C} \to T_{f(x)}\Sp^1, \qquad \text{ ou seja, } \qquad df_x :{\mathbb R} \to {\mathbb R}, 
\end{equation}
\'e definida por 
\begin{equation} \label{derivee1}
df_x(v) = df_x(c'(0)) = (f \circ c)'(0)
\end{equation}
para toda curva $c : \, \, ]-1,+1[ \to \mathcal C$ tal que $c(0)=x$ e $v(x) = c'(0)$. 
De fato, $f\circ c$ \'e uma curva sobre  $\Sp^1$ 
tal que $(f\circ c)(0) = f(x)$.  Neste caso,  $(f \circ c)'(0)$ \'e um vetor tangente 
\`a  $\Sp^1$ no ponto $f(x)$. 

Vamos considerar agora a no\c c\~ao de valores cr\'iticos de uma aplica\c c\~ao suave 
$f: \mathcal{C}  \to \Sp^1$. Um ponto $a\in \mathcal{C} $ tal que 
$df_a : T_a\mathcal{C} \to T_{f(a)}\Sp^1$
tem o posto 0 \'e chamado de {\it ponto cr\'itico} de $f$.

Por defini\c c\~ao, um  ponto regular  da aplica\c c\~ao $f$ \'e um ponto  $x\in {\mathcal C} $ que n\~ao \'e cr\'itico. Em um  ponto  regular, $df_x : T_x{\mathcal C}  \to T_{f(x)} \Sp^1$ \'e um isomorfismo de espa\c cos vetoriais 
orientados e definimos o {\it sinal} de $df_x$ \'e $+1$ ou $-1$ se $df_x$ preserva ou n\~ao 
a orienta\c c\~ao, respectivamente. 

As imagens de pontos cr\'iticos s\~ao chamadas de {\it valores cr\'iticos} e os demais pontos s\~ao chamados {\it valores regulares} da aplica\c c\~ao.
Ent\~ao, um ponto $y\in \Sp^1$ \'e um valor regular  de $f$ se $f^{-1}(y) = \emptyset$ ou 
todos os pontos $x\in f^{-1}(y)$ 
s\~ao pontos regulares de $f$. Em um valor regular $y$ de $f$, podemos considerar o n\'umero inteiro 
$$ {\rm grau}(f;y) = \sum_{x\in f^{-1}(y)} {\rm sign} \; df_x,$$
onde ${\rm sign} \; df_x$ \'e o sinal de $df_x$ no ponto $x \in f^{-1}(y)$.

\begin{example}\label{exemplo1}
Consideremos o exemplo da aplica\c c\~ao $f : \Sp^1 \to \Sp^1$ dada pelo grafo da Figura \ref{aplicercle2}. 

\begin{center}
\begin{figure}[H]
\vglue -3.5 truecm
\begin{tikzpicture}[baseline]
\draw[help lines] (1,0) grid (7,4); 
\draw[ultra thick] 
  (1,0) 
    .. controls (2.5,8.2) and (5.5,-4.2) .. 
  (7,4);  
   \draw[dashed] (1.6,2) -- (1.6,0);
 \draw[dashed] (1.15,0.5) -- (1.15,0);
 \draw[dashed] (2.5,3) -- (2.5,0);
\draw[dashed] (5.5,1) -- (5.5,0);
\draw[dashed] (6.45,2) -- (6.45,0);
\draw [red,thick](1,0)--(1,4);

\node at (0.8,-0.1){$0$};
\node at (1.2,-0.3){$x_1$};
\node at (1.7,-0.3){$x_2$};
\node at (2.5,-0.3){$x_3$};
\node at (4,-0.3){$x_4=\pi$};
\node at (5.5,-0.3){$x_5$};
\node at (6.4,-0.3){$x_6$};
\node at (7,-0.3){$2\pi$};

\node at (0.2,0.5) {$y_1=\pi/4$};
\draw (1,0.5) -- (1.1,0.5);

\node at (0.2,1) {$y_2=\pi/2$};
\node at (0.4,2) {$y_3=\pi$};
\node at (0.1,3) {$y_4=3\pi/2$};
\node at (0.7,4) {$2\pi$};

\draw[->,red,thick] {(1.1,1.7) -- (1.1,2.5)};
\draw[->,thick] {(1.3,1.7) -- (1.65,2.5)};
\draw[<-,thick] {(4.5,1.7) -- (3.65,2.5)};
\draw[->,thick] {(6.2,1.7) -- (6.55,2.5)};

\end{tikzpicture}
\vskip -3.7 truecm
\caption {Representa\c c\~ao gr\'afica da aplica\c c\~ao $f$ na esfera.}\label{aplicercle2}
\end{figure}
\end{center}

\vskip -1 truecm

Os pontos $x_3 = \pi/2$ e $x_5 = 3\pi /2$ s\~ao pontos cr\'iticos onde a derivada se anula. 
O valor $y_3=\pi$ \'e um valor regular tal que $f^{-1}(y_3) = \{ x_2, x_4, x_6 \}$. Nos pontos $x_2$ e $x_6$ a aplica\c c\~ao $df$ preserva a orienta\c c\~ao, mas no ponto $x_4$, a derivada $df$ \'e negativa e 
$df$ inverte a orienta\c c\~ao. Ent\~ao temos:
\begin{equation}\label{critico}
 {\rm grau}(f; y_3) = \sum_{x\in f^{-1}(y_3)} {\rm sign} \; df_x = {\rm sign} \; df_{x_2} + {\rm sign} \; df_{x_4} 
+ {\rm sign} \; df_{x_6} = +1 -1 +1 = +1.
\end{equation}

\begin{center}
\begin{figure}[H]
\begin{tikzpicture}[baseline]
\node at (0,0) {$\bullet$};
\node at (0.15,-0.2) {$0$};
\node at (2.5cm,0) {$\circ$};
\node at (2.2cm,0) {$y_0$};
\node at (1.5cm,1.5cm) {$y_1$};
\node at (0,2.2cm) {$y_2$};
\node at (0,2.5cm) {$\circ$};
\node at (-2.5cm,0) {$\circ$};
\node at (-2.1cm,0) {$y_3$};
\node at (0,-2.2cm) {$y_4$};
\node at (0,-2.5cm) {$\circ$};

 \draw (-7.5,0) circle (2.5cm);
 \draw [->>,thick] (-5,0) arc (0:35:2.5cm);
\node at (-7.5,0) {$\bullet$};

\node at (-5.4,0) {$x_0$};
\node at (-5,0) {$\circ$};
\node at (-5.45,0.7) {$x_1$};
\node at (-5.115,0.7) {$\circ$};
\node at (-6,1.7) {$x_2$};
\node at (-5.92,1.92) {$\circ$};
\node at (-7.35,-0.2) {$0$};
\node at (-7.5,2.5) {$\circ$};
\node at (-7.35,2.2) {$x_3$};
\node at (-10,0) {$\circ$};
\node at (-9.7,0) {$x_4$};
\node at (-7.5,-2.5) {$\circ$};
\node at (-7.35,-2.2) {$x_5$};

\draw[->,thick] (-4.5,0) -- (-3.8,0);
\node at (-4.2,0) [above] {$f$};

 \draw [red] (0,0) circle (2.5cm);
  \draw [->,red] (2.5cm,0) arc (0:45:2.5cm);
 \draw [->>] (2.7cm,0cm) arc (0:45:2.7cm);
\draw [>>->>] (2.7cm,0cm) arc (0:90:2.7cm);
  \draw [->>] (0cm,2.7cm) arc (90:180:2.7cm);
   \draw [->>] (-2.7cm,0cm) arc (180:270:2.7cm);
 \draw [<<-] (0cm,2.9cm) arc (90:180:2.9cm);
  \draw [<<-](-2.9cm,0cm) arc (180:270:2.9cm);
  \draw  [->>] (0cm,3.1cm) arc (90:180: 3.1cm);
  \draw  [->>] (-3.1cm,0cm) arc (180:270: 3.1cm);
   \draw 
  (2.7,0) 
    .. controls (2.6,-1.5)  and (2.4,-2.7) .. 
  (0,-3.1); 
   \draw (0cm,2.9cm) arc (-90:90:0.1cm);
    \draw (0cm,-2.9cm) arc (-90:90:0.1cm);
\end{tikzpicture}
\medskip
\caption {O grau da aplica\c c\~ao $f$: A ``caminhada'' do ponto $f(x)$ est\'a em preto: 
O ponto  $y_1 = \pi/4$ \'e um valor regular tal que $f^{-1}(y_1) = \{ x_1 \}$. Neste ponto,  
temos a  mesma orienta\c c\~ao do que a orienta\c c\~ao da circunfer\^encia de base (vermelho), 
ent\~ao a aplica\c c\~ao $df$ preserva a orienta\c c\~ao e temos $ {\rm grau}(f;y_1) = +1$.
No ponto $y_3$ 
temos duas vezes a orienta\c c\~ao da circunfer\^encia de base (positiva) e uma vez a orienta\c c\~ao oposta
(negativa). Temos $ {\rm grau}(f;y_3) = +1$ (veja \ref{critico}). 
 }\label{aplicercle1}
\end{figure}
\end{center}
\end{example}

\vskip -1 truecm
Observamos que, por causa da compacidade de $\mathcal C$, ent\~ao $f^{-1}(y)$ consiste 
em um  n\'umero finito de pontos. 

Uma propriedade fundamental  que vamos mostrar \'e que   $ {\rm grau}(f;y) $ n\~ao depende 
do valor regular de $f$, ele \'e denotado por $ {\rm grau}(f)$ e chamado {\it grau de $f$}.

Note que os pontos cr\'iticos n\~ao s\~ao necessariamente pontos isolados. De fato, se um ponto cr\'itico $y_i$ 
n\~ao for isolado, ent\~ao existir\'a um intervalo 
(``horizontal'' no grafo)  $[a,b]$ tal que $f([a,b])=y_i$. \'E f\'acil de ver que podemos substituir 
$f$ por uma pequena perturba\c c\~ao $\widetilde f$ de $f$ (por exemplo uma pequena 
sinusoida) com pontos cr\'iticos isolados e tal que o grau n\~ao depende da perturba\c c\~ao. 

Quando o ponto $x$ faz um turno completo da curva fechada $\mathcal C$, 
o ponto $f(x)$ faz uma caminhada sobre a circunfer\^encia $\Sp^1$. Ele pode fazer as vezes uma volta 
(um returno), as vezes ficar no mesmo lugar
(ponto fixo) mas no fim do turno, ele volta 
no ponto de partida. Chamamos de ponto de volta (ou de returno) os pontos $y_i=f(x_i)$ tais que 
$y$ muda de sentido antes e depois de $y_i$ (veja na Figura \ref{aplicercle1} os pontos $y_2$ e $y_4$).

\begin{lemma}
Os pontos de volta s\~ao   pontos cr\'iticos. 
\end{lemma}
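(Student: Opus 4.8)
The plan is to reduce the statement to an elementary fact about real functions by means of a local parametrization. First I would choose a regular smooth parametrization $c:\,]-\varepsilon,\varepsilon[\,\to\mathcal{C}$ of the curve near a turning point $x_i=c(0)$, so that $c'(0)$ is a nonzero vector spanning the $1$-dimensional space $T_{x_i}\mathcal{C}$. Since $f$ is smooth and $\Sp^1$ is locally diffeomorphic to $\R$ through the angular coordinate (the covering map $\theta\mapsto(\cos\theta,\sin\theta)$), I can lift the composite $f\circ c$ near $t=0$ to a smooth real-valued angle function $\theta(t)$ with $f(c(t))=(\cos\theta(t),\sin\theta(t))$.

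Next I would make precise the notion of turning point. The image point $f(x)$ travels along $\Sp^1$ in the positive or negative sense exactly according to the sign of $\theta'(t)$. By definition, $x_i$ is a ponto de volta when $y=f(x)$ reverses its sense on passing through $y_i=f(x_i)$; in terms of the lift this says that $\theta'$ changes sign at $t=0$, so that $\theta$ attains a local extremum at $t=0$.

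Then I would invoke Fermat's theorem: a differentiable function with a local extremum at an interior point of its domain has vanishing derivative there, hence $\theta'(0)=0$. Unwinding the definition of the linear tangent map in (\ref{derivee1}), we have $df_{x_i}(c'(0))=(f\circ c)'(0)$, and this tangent vector to $\Sp^1$ is precisely $\theta'(0)$ times the unit tangent at $f(x_i)$; thus $(f\circ c)'(0)=0$. Since $c'(0)\neq 0$ generates $T_{x_i}\mathcal{C}$ and the linear map $df_{x_i}\colon\R\to\R$ annihilates this generator, $df_{x_i}$ is identically zero, i.e.\ it has rank $0$. By the definition of critical point this means that $x_i$ is a critical point of $f$, which is the assertion.

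The main obstacle I anticipate is the bookkeeping needed to make the informal phrase \emph{``$y$ muda de sentido''} rigorous: one must guarantee that the local angular lift $\theta$ exists and is smooth (which follows from $f\circ c$ being smooth and $\theta\mapsto(\cos\theta,\sin\theta)$ being a local diffeomorphism), and then argue that a genuine reversal of sense forces a local extremum of $\theta$. Handling the degenerate possibility that $\theta$ is constant on one side of $t=0$ is harmless here, since any reversal still makes $t=0$ a local extremum and Fermat still yields $\theta'(0)=0$; the converse implication (that a critical point need not be a turning point) is not claimed and requires no argument.
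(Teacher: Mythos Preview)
Your argument is correct and is essentially the same as the paper's: both observe that at a ponto de volta the sense of travel, i.e.\ the sign of the derivative, reverses, and conclude that the derivative vanishes there. You carry this out more rigorously by introducing the angular lift $\theta$ and invoking Fermat's theorem on its local extremum, whereas the paper simply asserts in one line that a sign change of $df_x$ forces $df_x=0$ at the turning point; the substance is identical.
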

\begin{proof}
Em um ponto de volta, a orienta\c c\~ao da caminhada muda de sentido. Isso significa que o sinal 
da derivada $df_x$ muda.  A derivada \'e nula em um ponto de volta, ent\~ao o ponto \'e cr\'itico.
\end{proof}

\begin{remark}
Um ponto cr\'itico pode n\~ao ser de volta. De fato, um ponto de inflex\~ao do grafo de $f$ 
\'e um ponto cr\'itico mas o sinal de $df_x$ n\~ao muda neste ponto.
\end{remark}

Escolhamos como ponto de partida um ponto $x_0$  de $\mathcal C$ tal que $y_0 = f(x_0)$ 
n\~ao seja ponto cr\'itico (ent\~ao n\~ao seja ponto de volta). 
Quando o ponto $x$ fizer um turno no sentido positivo, o ponto $y=f(x)$ 
poder\'a passar no ponto $y_0$ um certo n\'umero de vezes $p_0$ no sentido positivo 
e um n\'umero de vezes $q_0$ no sentido negativo.  \'E preciso de n\~ao esquecer a passagem inicial,
que  tamb\'em \'e a passagem final e que pode ser positivo ou negativo. 
De fato, $p_0-q_0$ \'e o n\'umero de turnos que faz $y=f(x)$ em $\Sp^1$.
O grau $ {\rm grau}(f;y_0) $ \'e igual a $p_0-q_0$.
 
 \begin{proposition}
O grau n\~ao depende do ponto regular.
 \end{proposition}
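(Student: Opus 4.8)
The plan is to identify $\text{grau}(f;y)$ with the number of turns the point $f(x)$ makes on $\Sp^1$ as $x$ runs once around $\mathcal{C}$ --- the quantity $p_0-q_0$ anticipated just above --- and to show this number is a property of the loop alone, hence independent of $y$. First I would fix a positively oriented parametrization $c:[0,1]\to\mathcal{C}$ with $c(0)=c(1)$ traversing $\mathcal{C}$ exactly once, and set $g=f\circ c:[0,1]\to\Sp^1$. Using the universal covering $\pi:\R\to\Sp^1$, $\pi(\theta)=(\cos\theta,\sin\theta)$, I would lift $g$ to a continuous, piecewise smooth $\tilde g:[0,1]\to\R$ with $\pi\circ\tilde g=g$. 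Since $g(0)=g(1)$, the difference $\tilde g(1)-\tilde g(0)$ is an integer multiple of $2\pi$, say $\tilde g(1)-\tilde g(0)=2\pi n$, and this integer $n$ depends only on $g$, hence only on $f$ and the chosen orientation, and on no point of $\Sp^1$.

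Next I would translate the definition of the degree into this language for a fixed regular value $y=\pi(\theta_0)$. The points $x\in f^{-1}(y)$ along the loop correspond exactly to the parameters $t$ with $\tilde g(t)\in\theta_0+2\pi\Z$; because $y$ is regular, at each such $t$ one has $\tilde g'(t)\neq 0$, and $df_x$ preserves or reverses orientation according to the sign of $\tilde g'(t)$, so $\text{sign}\,df_x=\text{sign}\,\tilde g'(t)$. Consequently
$$\text{grau}(f;y)=\sum_{x\in f^{-1}(y)}\text{sign}\,df_x$$
is precisely the signed number of crossings of the uniformly spaced grid $\theta_0+2\pi\Z$ by the function $\tilde g$, an upward crossing counting $+1$ and a downward one $-1$. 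Finiteness of this count follows from compactness of $\mathcal{C}$ together with regularity of $y$, as already observed.

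The decisive step is the evaluation of this signed crossing number. Since consecutive grid lines are spaced exactly $2\pi$ apart, I would argue by a telescoping count: for a single level $\ell=\theta_0+2\pi k$ with $\tilde g(0),\tilde g(1)\notin\theta_0+2\pi\Z$, the signed crossings equal $[\![\,\tilde g(1)>\ell\,]\!]-[\![\,\tilde g(0)>\ell\,]\!]$, depending only on the positions of the endpoints relative to $\ell$; summing over all $k\in\Z$ the individually divergent terms cancel and collapse to $(\tilde g(1)-\tilde g(0))/2\pi=n$. Hence $\text{grau}(f;y)=n$ for every regular value $y$, and since $n$ was defined with no reference to $y$, the degree is independent of the regular point. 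I expect this telescoping to be the main obstacle: it requires choosing the base point $x_0$ with $f(x_0)\neq y$ so that the endpoints avoid the grid, and careful bookkeeping of up/down crossings so that all intermediate grid lines cancel and only the net displacement $n$ survives. An alternative route avoiding lifts is to move $y$ continuously to a second regular value $y'$ and observe that the signed count $p_y-q_y$ can change only as $y$ sweeps across a critical value, where crossings are created or destroyed in cancelling $\pm$ pairs, so that $p_y-q_y$ remains constant.
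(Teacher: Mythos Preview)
Your main argument via the universal cover is correct and genuinely different from the paper's. You lift $f\circ c$ to $\tilde g:[0,1]\to\R$, define the winding number $n=(\tilde g(1)-\tilde g(0))/2\pi$ independently of any point of $\Sp^1$, and then identify $\mathrm{grau}(f;y)$ with the signed number of crossings of the grid $\theta_0+2\pi\Z$, which you reduce to $n$ by the intermediate-value count $[\![\tilde g(1)>\ell]\!]-[\![\tilde g(0)>\ell]\!]$ level by level. (A minor phrasing issue: the per-$k$ terms are not ``individually divergent''; each difference vanishes except for the finitely many $\ell$ lying strictly between $\tilde g(0)$ and $\tilde g(1)$, and those contribute $\mathrm{sign}(n)$ each, giving $n$.) The paper, by contrast, does exactly what you sketch as your ``alternative route'': it moves the regular value $y$ continuously along $\Sp^1$ and argues directly that $p-q$ is locally constant on intervals of regular values, and that at a critical value which is a turning point a positive and a negative passage are created or destroyed together, so $p-q$ is unchanged (non-turning critical points are harmless). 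Your lifting approach is cleaner conceptually, produces the invariant $n$ up front, and generalizes readily to higher dimensions via homology or integration; the paper's approach stays at the level of the pictures already drawn and requires no covering-space machinery, which suits its expository aims.
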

 
 \begin{proof}
Seja $y_0$ um ponto regular. Quando $y_0$ muda continuamente, ficando ponto regular, 
\'e claro que o n\'umero $p_0-q_0$ n\~ao muda. Ent\~ao,  o grau \'e constante quando $y$ descreve um intervalo
de pontos regulares na circunfer\^encia $\Sp^1$. Agora, o que acontece quando encontramos um ponto cr\'itico?

$\bullet$ Se o ponto cr\'itico n\~ao for um ponto de volta, o sentido da caminhada n\~ao mudar\'a entre antes 
e depois, ent\~ao o  n\'umero $p_0-q_0$ n\~ao muda e o grau n\~ao muda.

$\bullet$ Se o ponto cr\'itico $y_i= f(x_i)$ for um ponto de volta (por exemplo 
o ponto $y_2$ na Figura \ref{aplicercle1}), ent\~ao a caminhada 
chegar\'a de  um lado de $y_i$  e depois a caminhada volta no mesmo lado. 
Antes do ponto $y_i$ (por exemplo o ponto $y_1$ na Figura \ref{aplicercle1}), 
a caminhada passa $p$ vezes no sentido positivo e $q$ vezes no sentido negativo.
Depois do ponto $y_i$ (por exemplo o ponto $y_3$ na Figura \ref{aplicercle1}),  
a caminhada passa $p+1$ vezes no sentido positivo e $q+1$ 
vezes no sentido negativo. A diferen\c ca $p-q$, isto \'e o grau, n\~ao muda.
\end{proof}

\subsection{Caso de superf\'icies}\label{2.2}

Consideraremos superf\'icies suaves compactas, sem bordas e orientadas, por exemplo, a esfera $\Sp^2$, o toro $\T$, etc. A defini{\c c}\~ao de uma superf\'icie suave \'e a seguinte:

A superf\'icie $\mathcal S$ \'e {\it suave} se existe um recobrimento 
localmente finito 
\footnote{Localmente finito significa que para cada ponto $x$ de $\mathcal S$, 
existe uma vizinhan\c ca $U_x$ de $x$ em $\mathcal S$ que encontra um n\'umero finito de $U_i$.}
$\{ U_i\}$ de $\mathcal{S}$ e, para cada $i$,  existe um homeomorfismo $\varphi_i: U_i \to B_i$,  onde $B_i$ \'e um disco de dimens\~ao dois em $\R^2$, tais que, 
cada vez que $U_i \cap U_j$ n\~ao \'e vazio, a aplica\c c\~ao 
$$h_{ij} : = \varphi_j \circ {\varphi_i}^{-1}: {\varphi_i(U_i \cap U_j)} \to {\varphi_j(U_i \cap U_j)}$$
\'e um difeomorfismo (veja Figura \ref{figura25}). Na seguinte, vamos identificar as abertos $U_i$ 
com bolas $B_i$.

\begin{center}
\begin{figure}[h!]
\begin{tikzpicture}


\draw  (-3cm,4cm) arc (180:30:1);
\draw  (-3cm,4cm) arc (-180:-30:1);
\draw  (-1.5cm, 4.866cm) arc (60:-60:1) arc  (60:-60:-1);
\fill[fill=blue!20] (-1.5cm, 4.866cm) arc (60:-60:1) arc  (60:-60:-1);
\node at (-1.5, 4) {$\bullet$};
\node at (-3,4)[right]{$B_i$};


\draw  (1, 4) circle (1 cm);
\draw  (0.5cm, 4.866cm) arc (60:-60:1) arc (60:-60:-1);
\fill[fill=blue!20] (0.5cm, 4.866cm) arc (60:-60:1) arc (60:-60:-1);
\node at (0.5, 4) {$\bullet$};
\node at (2,4)[left]{$B_j$};

\draw (0,0) ellipse (3cm and 1.5cm);

\draw (-0.5,0) ellipse (1cm and 0.5cm);
\draw (0.5,0) ellipse (1cm and 0.5cm);
\fill[fill=blue!20](0cm, 0.4cm) arc (100:-100:0.4) arc (100:-100: -0.4);
\draw (0cm, 0.4cm) arc (100:-100:0.41) arc (100:-100: -0.41);
\node at (0cm, 0cm){$\bullet$};
\node at (-1cm, 0cm){$U_i$};
\node at (1cm, 0cm){$U_j$};

\draw[->] (-0.9, 4) sin (-0.45, 4.1); 
\draw (-0.45, 4.1) cos  (-0.1,4);
\node at (-0.45, 4.1)[above]{$h_{ij}$};

 \draw[->] (-0.8, 0) arc (180: 200: -5.5);
 \draw (-1.1, 1.8) arc (200: 220: -5);
\node at (-1, 2)[right]{$\varphi_i$};

\draw[->] (0.7, 0) arc (180: 160: 5.5);
 \draw (1, 1.8) arc (160: 180: -5);
\node at (0.7, 2){$\varphi_j$};

\end{tikzpicture}
\caption{Superf\'icie suave.}\label{figura25}
\end{figure}
\end{center}

\pagebreak

\subsubsection{Fibrado tangente a uma superf\'icie suave}\hfill

Da mesma maneira que o fibrado tangente \`a uma curva suave, consideramos para cada ponto $x$ de uma superf\'icie  ${\mathcal{S}}$ o espa{\c c}o vetorial tangente a ${\mathcal{S}}$, de dimens\~ao 2, sobre o corpo $\R$, denotado por $T_x {\mathcal{S}}$. 

Por exemplo, o fibrado tangente \`a esfera $\Sp^2$ \'e $T {\Sp^2} = \bigcup_x T_x {\Sp^2}$. 
Todas as fibras $T_x {\Sp^2}$ s\~ao homeomorfas a $\R^2$. Chamamos $\R^2$ de {\it fibra tipo}. 
O fibrado $T \Sp^2$ \'e localmente trivial, assim como o fibrado tangente 
a qualquer superf\'icie $\mathcal{S}$.  Isso significa que cada ponto $a$ 
da superf\'icie $\mathcal{S}$ admite uma vizinhan{\c c}a $U_a$ tal que 
 $T \mathcal{S} \vert_{U_a}$ \'e isomorfismo a $U_a \times \R^2$:
\begin{equation}\label{trivial}
T \mathcal{S} \vert_{U_a} \cong U_a \times \R^2
\end{equation}
(veja Figura  \ref{figura26}). 
Ent\~ao,  $T\mathcal{S} \vert_{U_a}$ pode ser visto como uma cole{\c c}\~ao de espa{\c c}os vetoriais $\R^2$ sobre $U_a$.

\begin{center}
\begin{figure}[h!]
\begin{tikzpicture}

\draw (0,0) ellipse (4cm and 1 cm);
\node at (-3,-0.2){$\bullet$};
\node at (-1.5,0.2){$\bullet$};
\node at (-1.2,-0.6) {$U_a$};

\draw (-0.6,0.45) -- (-0.6,-0.6) -- (0.6,-0.35) -- (0.6,0.75) -- (-0.6,0.45);
\node at (0,0) {$\bullet$};
\node at (0,0) [below,right] {$a$};
\draw[->] (0,0) --(0.25, 0.5);
\node at (-0.17, 0.05) [above] {$v(a)$};

\draw (1.4,0.3) -- (1.4,-0.75) -- (2.6,-0.45) -- (2.6,0.65) -- (1.4,0.3);
\node at (2,-0.2) {$\bullet$};
\node at (2,-0.2) [below] {$x$};
\draw[->] (2,-0.2) --(1.8, 0.3);
\node at (2.22, 0.05) [above] {$v(x)$};

\node at  (-5,4) {$T \mathcal{S} \vert_{U_a} \cong$};
\node at  (-4.5,3.5) {$U_a \times \R^2$};

\node at (-3,4){$\bullet$};
\draw (-3.5,2) -- (-2.5,3) -- (-2.5,6) -- (-3.5,5) -- (-3.5,2);


\node at (-1.5,4){$\bullet$};
\draw (-2,2) -- (-1,3) -- (-1,6) -- (-2,5) -- (-2,2);

\node at (0,4){$\bullet$};
\draw (-0.5,2) -- (0.5,3) -- (0.5,6) -- (-0.5,5) -- (-0.5,2);
\draw[->] (0,4) --(0.25, 4.5);
\node at (0.2, 4.5) [above] {$v(a)$};

\node at (2,4){$\bullet$};
\draw (1.5,2) -- (2.5,3) -- (2.5,6) -- (1.5,5) -- (1.5,2);
\draw[->] (2,4) --(1.75, 4.5);
\node at (2, 4.5) [above] {$v(x)$};
\node at (1.8, 6.2) {$T_x{\mathcal{S}} \cong \R^2$};

\node at (3.5,0){$\bullet$};
\node at (3.5,4){$\bullet$};
\draw (3,2) -- (4,3) -- (4,6) -- (3,5) -- (3,2);

\draw [->] (5,4) --(6,4);
\node at (5.5,4)[above]{$pr_2$};
\node at (7.5,4){$\bullet$};
\draw (7,2) -- (8,3) -- (8,6) -- (7,5) -- (7,2);
\node at (7.7, 2) {$\R^2$};

\end{tikzpicture}
\caption{O fibrado $T \mathcal{S}$ \'e localmente trivial.}\label{figura26}
\end{figure}
\end{center}

Uma superf\'icie suave orientada \'e uma superf\'icie suave $\mathcal S$ com 
uma escolha de orienta\c c\~ao para cada espa\c co tangente $T_a\mathcal{S}$, tal que para
cada $x\in U_a$, a orienta\c c\~ao de $T_x\mathcal{S}$ corresponde a orienta\c c\~ao de 
$\{x\}  \times \R^2$ pelo isomorfismo (\ref{trivial}). 

O fibrado tangente  $T \Sp^2$ n\~ao \'e globalmente trivial, isto \'e, 
n\~ao h\'a homeomorfismo global  
$$T {\Sp^2} = \Sp^2 \times \R^2.$$
Isso \'e uma das consequ\^encias do Teorema de Poincar\'e-Hopf do qual  veremos a prova (veja Corol\'ario \ref{cheveux}). 

Mas, por exemplo, o fibrado tangente ao toro $\T$ \'e globalmente trivial. 

Do mesmo jeito do que no caso de curva, 
um vetor tangente \`a superf\'icie  suave  $\mathcal{S}$ no ponto $x$ \'e um elemento $v(x)$ de $T_x\mathcal{S}$. 
Podemos definir um vetor tangente como a
 classe de equival\^encia de curvas diferenci\'aveis 
$c : \, \, ]-1,+1[ \to \mathcal S$ desenhadas sobre  a superf\'icie 
$\mathcal{S}$ e tais que $c(0)=x$. 
Temos $v(x) = c'(0)$. 

\subsubsection{Pontos e valores cr\'iticos, grau} \label{grau2}\hfill

Seja $f: \mathcal{S}  \to \Sp^2$ 
uma aplica\c c\~ao suave   de uma superf\'icie suave $\mathcal{S}$ em $\Sp^2$, 
do mesmo jeito do que no caso duma curva, 
 a {\it derivada}  da aplica\c c\~ao suave 
$f: \mathcal{S}  \to \Sp^2$,  em um ponto $x$ da curva $\mathcal{S}$, chamada de 
{\it aplica\c c\~ao linear tangente} e denotada por
\begin{equation} \label{derivee2}
df_x : T_x\mathcal{S} \to T_{f(x)}\Sp^2, \qquad \text{ ou seja } \qquad df_x :{\mathbb R}^2 \to {\mathbb R}^2 
\end{equation}
\'e definida por 
\begin{equation} \label{derivee3}
df_x(v) = df_x(c'(0)) = (f \circ c)'(0)
\end{equation}
para toda curva $c : \, \, ]-1,+1[ \to \mathcal S$ tal que $c(0)=x$ e $v(x) = c'(0)$. 
De fato, $f\circ c$ \'e uma curva sobre  $\Sp^2$ 
tal que $(f\circ c)(0) = f(x)$.  Neste caso,  $(f \circ c)'(0)$ \'e um vetor tangente 
\`a  $\Sp^2$ no ponto $f(x)$. 

Seja $f: \mathcal{S}  \to \Sp^2$ 
uma aplica\c c\~ao suave   de uma superf\'icie suave $\mathcal{S}$ em $\Sp^2$, 
um ponto $a\in \mathcal{S}$ tal que a aplica\c c\~ao derivada 
$$df_a : T_a\mathcal{S} \to T_{f(a)}\Sp^2$$
tem o posto menor do que 2 \'e chamado de  {\it ponto cr\'itico} de $f$. 

Um ponto $a$ da superf\'icie $\mathcal{S}$ \'e um ponto regular da aplica\c c\~ao $f$
se ele n\~ao \'e cr\'itico. As imagens de pontos cr\'iticos s\~ao chamadas de {\it valores cr\'iticos} 
e os demais pontos  s\~ao chamados de {\it valores regulares} da aplica\c c\~ao.

Como no caso de dimens\~ao 1, um ponto $y\in \Sp^2$ \'e um valor regular  de $f$ se $f^{-1}(y)=\emptyset$
ou todos  os pontos $x\in f^{-1}(y)$ 
s\~ao pontos regulares de $f$. Em um valor regular $y$ de $f$, podemos considerar o n\'umero inteiro 
$$ {\rm grau}(f;y) = \sum_{x\in f^{-1}(y)} {\rm sign} \; df_x.$$
Assim, podemos ver que o  $ {\rm grau}(f;y) $ n\~ao depende do valor regular de $f$. Portanto, podemos denot\'a-lo simplesmente por $ {\rm grau}(f)$ e chamaremos de {\it grau de $f$}.

\section{Singularidades de campos de vetores  tangentes --  \'Indice em um ponto singular}
\label{indices1}
\subsection{Singularidades de campos de vetores  tangentes a uma superf\'icie suave}\label{indice1}
Sejam $\mathcal{S}$ uma superf\'icie suave e $U_a$ uma vizinhan\c ca de um ponto $a\in \Su$. 
Um campo de vetores sobre $U_a$, tangente a $\mathcal{S}$ \'e uma aplica{\c c}\~ao cont\'inua:
$$v: U_a \to T \mathcal{S} \vert_{U_a}$$
tal que, para cada $x \in U_a$, $v(x) \in T_x \mathcal{S} \cong \R^2$. 

Uma {\it singularidade} de um campo de vetores $v$ \'e um ponto $a$ tal que $v(a) = 0$. 

A singularidade $a$ \'e isolada se existir uma vizinhan{\c c}a $V_a$ 
de $a$ tal que para cada 
ponto $x$ de $V_a$ diferente de $a$, teremos $v(x) \neq 0$. 

Mostraremos que, a uma singularidade isolada de $v$, podemos associar um 
num\'ero inteiro que chamaremos de {\it \'indice do campo de vetores $v$ no ponto $a$} e denotaremos 
por $I(v, a)$. O \'indice de um campo de vetores pode ser definido  de muitas maneiras equivalentes 
em  v\'arios contextos: usando grupos de homologia, usando integrais de formas diferenciais, etc.
Aqui, vamos usar  a defini\c c\~ao em termos de aplica\c c\~ao de Gauss e de grau (veja tamb\'em 
\cite{Li1,Li2}). 

Suponhando a superf\'icie $\mathcal{S}$ orientada, consideramos  uma vizinhan{\c c}a $V_a$ 
de $a$ homeomorfa a um aberto de $\R^2$ e tal que $a$ seja a \'unica singularidade de $v$ dentro de 
$V_a$.  Isto \'e, $a$ \'e uma singularidade isolada de $v$ (veja Figura \ref{figura27}). 
Podemos considerar um segundo exemplar 
de $\R^2$, denotado por  $\widetilde{\R}^2$, com a mesma orienta{\c c}\~ao.

\begin{center}
\begin{figure}[h!]
\begin{tikzpicture}
\draw (0,0) circle (1.5 cm);
\node at (0,0)[below]{a};
\node at (0,0) {$\bullet$};
\node at (0, -1.5)[above]{$B_a$};
\node at (1.5, -1.5)[above]{${\bf S}_a^1$};
\node at (1,1.118) {$\bullet$};
\node at (1,1.118)[below] {$x$};
\draw[->] (1,1.118) --(1.7, 1);
\node at (0.5,1.4142) {$\bullet$};
\node at (0.5,1.4142)[below] {$y$};
\draw[->] (0.5,1.4142) --(1.2, 1.6);
\node at ((-0.5, 1.4142) {$\bullet$};
\node at ((0, 1.5) {$\bullet$};
\draw[->] (0, 1.5) --(0.35, 2.12);
\draw[->] (-0.5, 1.4142) --(- 1, 2.0);
\node at (-0.5, 1.4142)[below] {$z$};
\node at (-1 , 1.118) {$\bullet$};
\draw[->] (-1 , 1.118) --(-1.7, 0.9322);
\draw[->, very thick] (0.5,0.81018) arc (60:90:1.4);

\draw[->] (3,0) -- (4,0);
\node at (3.5,0) [above] {$\gamma_v$};


\draw (7,0) circle (1.5 cm);
\node at (7,0)[below]{0};
\node at (7,0) {$\bullet$};
\node at (8.5, -1.5)[above]{${\bf S}^1$};
\draw[->, very thick] (7,0) --(8.5, -0.25);
\node at (8.5, -0.25)[right]{$\gamma_v(x)$};
\node at (7.75, -0.32) {$\widetilde v(x)$};
\draw[->, very thick] (7,0) --(8.5, 0.35);
\node at (8.5, 0.35)[right]{$\gamma_v(y)$};
\node at (7.75, 0.4) {$\widetilde v(y)$};

\draw[->, very thick] (7,0) --(6, 1.15);
\node at (6.1, 1.15)[left]{$\gamma_v(z)$};
\node at (6.12, 0.4) {$\widetilde v(z)$};

\node at (7,-2){$\widetilde{\R}^2$};

\draw[->, very thick] (8, 1.5) arc (55:90:1.6);

\draw (2.5, 0) arc (0:45:2.5) arc (45:60:1) arc (60:90:2.5) arc (90: 120:4) arc (120: 150: 2.5) 
arc (150: 180: 1.9); 
\draw (-2.85,-0.08) sin (0, -2.5);
\draw (-0,-2.5) parabola[parabola height=-0.7cm] +(2.5, 2.5);

\node at (0, -2.3)[above] {$V_a$};
\node at (2,-2){$\R^2$};

\end{tikzpicture}
\caption{A aplica\c c\~ao de Gauss.}\label{figura27}
\end{figure}
\end{center}


Seja ${B}_a$ uma pequena bola de centro $a$ dentro de $V_a$. Ao longo da esfera $\Sp^1_a$, que \'e a borda de ${B}_a$, o campo de vetores $v$ n\~ao tem singularidades.

Para cada ponto $x$ de $\Sp^1_a$, consideramos o vetor 
$$\widetilde v(x) = \frac{v(x)}{\Vert v(x) \Vert}$$ 
em
$\widetilde{\R}^2$, 
onde $\Vert v(x) \Vert$ \'e a norma euclidiana de $v(x)$. 

O vetor $\widetilde v((x)$ tem tamanho 1 e a sua extremidade, $\gamma_v(x)$, pertence \`a esfera ${\Sp}^1$ de raio 1 e  centro na origem de $\widetilde{\R}^2$. 

A aplica{\c c}\~ao $\gamma_v: \Sp^1_a \to {\Sp}^1$  \'e chamada de {\it aplica{\c c}\~ao de Gauss}. 

Quando o ponto $x$ fizer um turno, no sentido positivo de $\Sp^1_a$, o ponto $\gamma_v (x)$  far\'a certos ``caminhos'' (peda\c cos de turno) da esfera ${\Sp}^1$ no sentido positivo ou negativo. 

\begin{remark}\label{orientation}
A orienta\c c\~ao positiva ou negativa do peda\c co de caminho do ponto $\gamma_v (x)$ 
providencia respetivamente a orienta\c c\~ao positiva ou negativa de $\widetilde{\R}^2$. 
\end{remark} 

Seja $\alpha$ um ponto de ${\Sp}^1$, um caminho passando no ponto $\alpha$ no sentido positivo vale $+1$ e um caminho passando no ponto $\alpha$ no sentido negativo vale $-1$. 

A soma destes valores \'e,
por defini\c c\~ao,  o \'indice $I(v;\alpha)$ do campo de vetores $v$ no ponto $a$. 
\'E f\'acil de ver que o  \'indice $I(v,a)$ \'e igual ao grau de $\gamma_v$ (veja \S \ref{grau}).

\begin{remark}\label{indice2}
Observamos que a aplica\c c\~ao de Gauss $\widetilde\gamma_v$ \'e a composta 
\begin{eqnarray*}
\Sp^1_a   \longrightarrow & T \mathcal{S} \vert_{\Sp^1_a} \cong \Sp^1_a \times \R^2 
& \buildrel{pr_2}\over \longrightarrow  \quad \R^2 \\
 x \quad   \mapsto &  v(x)  \quad & \mapsto \quad \widetilde\gamma_v(x) 
 \end{eqnarray*}
(veja Defini\c c\~ao \ref{naosei}). Esta propriedade \'e a base da teoria de obstru\c c\~ao, 
uma das teorias que permite de introduzir a teoria de classes caracter\'isticas.
\end{remark} 

\begin{example}
Consideramos o campo de vetores $v$ na Figura \ref{aplicercle3}. A aplica\c c\~ao de Gauss 
\'e a aplica\c c\~ao que consideramos no exemplo \ref{exemplo1} e temos $I(v,0) = +1$.

\begin{figure}[H]
\begin{tikzpicture}[baseline]
\node at (0,0) {$\bullet$};
\node at (2.5cm,0) {$\circ$};
\node at (2.2cm,0) {$x_0$};
\node at (0,2.5cm) {$\circ$};
\node at (-2.5cm,0) {$\circ$};
\node at (0,-2.5cm) {$\circ$};

\node at (0.1,-0.35) {$a$};
\node at (2.63,0.7) {$x_1$};
\node at (1.8,2) {$x_2$};
\node at (0,2.9cm) {$x_3$};
\node at (-2.15,0) {$x_4$};
\node at (0,-2.8) {$x_5$};
\node at (1.5,-1.7) {$x_6$};

 \draw (0,0) circle (2.5);
 
   \draw [->] (2cm,0.4cm) arc (20:50:2cm);

\path (1*10:2.5) coordinate (A1);
\node at (A1) {$\bullet$};
 \path (2*15:2.5) coordinate (A2);
 \node at (A2) {$\bullet$};
  \path (7*7.5:2.5) coordinate (A3bis);
 \node at (A3bis) {$\bullet$};
  \path (5*15:2.5) coordinate (A5);
 \node at (A5) {$\bullet$};
   \path (6*15:2.5) coordinate (A6);
 \node at (A6) {$\bullet$};
 
  \draw[->] (2.5,0) -- (3.3,0); 
  \begin{scope}[shift={(A1)},rotate=60]; 
  \draw[-> ] (A1) -- (0.7,-0.5);
  \end{scope}
   \begin{scope}[shift={(A2)},rotate=100];
    \draw[-> ] (A2) -- (0.8,0);
  \end{scope}
   \begin{scope}[shift={(A3bis)},rotate=180]; 
    \draw[-> ] (A3bis) -- (0.8,0);
  \end{scope}
 \begin{scope}[shift={(A5)},rotate=230];
    \draw[-> ] (A5) -- (0.8,0);
  \end{scope}
  \begin{scope}[shift={(A6)},rotate=270];
    \draw[-> ] (A6) -- (0.8,0);
  \end{scope}


 \path (5*22.5:2.5) coordinate (A7);
 \node at (A7) {$\bullet$};
 \path (6*22.5:2.5) coordinate (A8);
 \node at (A8) {$\bullet$};
  \path (7*22.5:2.5) coordinate (A9);
 \node at (A9) {$\bullet$};
  \path (8*22.5:2.5) coordinate (A10);
 \node at (A10) {$\bullet$};
  \path (9*22.5:2.5) coordinate (A11);
 \node at (A11) {$\bullet$};
 \path (10*22.5:2.5) coordinate (A12);
 \node at (A12) {$\bullet$};
  \path (11*22.5:2.5) coordinate (A13);
 \node at (A13) {$\bullet$};
  \path (12*22.5:2.5) coordinate (A14);
 \node at (A14) {$\bullet$};
 
 \begin{scope}[shift={(A7)},rotate=230];
  \draw[-> ] (A7) -- (0.80,0);
  \end{scope}
   \begin{scope}[shift={(A8)},rotate=210];
    \draw[-> ] (A8) -- (0.8,0);
  \end{scope}
  \begin{scope}[shift={(A9)},rotate=195];
    \draw[-> ] (A9) -- (0.8,0);
  \end{scope}
 \begin{scope}[shift={(A10)},rotate=180];
    \draw[-> ] (A10) -- (0.8,0);
  \end{scope}
 \begin{scope}[shift={(A11)},rotate=165];
    \draw[-> ] (A11) -- (0.8,0);
  \end{scope}
  \begin{scope}[shift={(A12)},rotate=150];
    \draw[-> ] (A12) -- (0.8,0);
  \end{scope}
  \begin{scope}[shift={(A13)},rotate=130];
  \draw[-> ] (A13) -- (0.80,0);
  \end{scope}
   \begin{scope}[shift={(A14)},rotate=90];
    \draw[-> ] (A14) -- (0.8,0);
  \end{scope}


\path (19*15:2.5) coordinate (A15);
\node at (A15) {$\bullet$};
\path (41*7.5:2.5) coordinate (A16);
\node at (A16) {$\bullet$};
  \path (22*15:2.5) coordinate (A18);
 \node at (A18) {$\bullet$};
  \path (23*15:2.5) coordinate (A19);
 \node at (A19) {$\bullet$};
  
\begin{scope}[shift={(A15)},rotate=130];
    \draw[-> ] (A15) -- (0.8,0);
  \end{scope}
\begin{scope}[shift={(A16)},rotate=180];
   \draw[-> ] (A16) -- (0.8,0);
 \end{scope}
   \begin{scope}[shift={(A18)},rotate=250];
    \draw[-> ] (A18) -- (0.8,0);
  \end{scope}
  \begin{scope}[shift={(A19)},rotate=300];
    \draw[-> ] (A19) -- (0.8,0);
  \end{scope}

 \draw[->, thick ] (4,0) -- (5,0);
 \node at (4.5,-0.4) {$\gamma_v$};
 
\node at (9.1,-0.35) {$0$};
\node at (9,0) {$\bullet$};
\node at (11.5cm,0) {$\circ$};
\node at (11.2cm,0) {$y_0$};
\node at (11.2cm,0.75cm) {$y_1$};
\node at (9,2.2cm) {$y_2$};
\node at (9,2.5cm) {$\circ$};
\node at (6.5cm,0) {$\circ$};
\node at (6.9cm,0) {$y_3$};
\node at (9,-2.2cm) {$y_4$};
\node at (9,-2.5cm) {$\circ$};

 \draw [red] (9,0) circle (2.5cm);
  \draw [->,red] (11.5cm,0) arc (0:45:2.5cm);
 \draw [->>] (11.7cm,0cm) arc (0:45:2.7cm);
\draw [>>->>] (11.7cm,0cm) arc (0:90:2.7cm);
  \draw [->>] (9cm,2.7cm) arc (90:180:2.7cm);
   \draw [->>] (6.3cm,0cm) arc (180:270:2.7cm);
 \draw [<<-] (9cm,2.9cm) arc (90:180:2.9cm);
  \draw [<<-](6.1cm,0cm) arc (180:270:2.9cm);
  \draw  [->>] (9cm,3.1cm) arc (90:180: 3.1cm);
  \draw  [->>] (5.9cm,0cm) arc (180:270: 3.1cm);
   \draw 
  (11.7,0) 
    .. controls (11.6,-1.5) and (11.4,-2.7) .. 
  (9,-3.1); 

   \draw[->>,very thick] (11,1) arc (23:65:2.1);
   \draw (9cm,2.9cm) arc (-90:90:0.1cm);
    \draw (9cm,-2.9cm) arc (-90:90:0.1cm);
    
    \begin{scope}[shift={(9,0)},rotate=25];
    \draw[-> ,very thick] (0,0) -- (2.43,0);
  \end{scope}
  
\end{tikzpicture}
\medskip
\caption {Aplica\c c\~ao de Gauss. Aqui: $y_0 = \gamma_v (x_0)$, $y_1 = \gamma_v (x_1) $, $y_2 = \gamma_v (x_5) $, 
$y_3 = \gamma_v (x_2) = \gamma_v (x_4) =\gamma_v (x_6) $
e  $y_4 = \gamma_v (x_3) $. 
}\label{aplicercle3}
\end{figure}
\end{example}

Aqui temos alguns exemplos de \'indices:
\vglue - 0.5truecm

\begin{center}
\begin{figure}[h!]
\begin{tikzpicture} 


\draw (0,0) circle (1.5cm);

\node at (0,0) {$\bullet$}; 
\node at (0,0)[below] {$a$}; 

\draw[->] (0.5, 1.414) -- (1, 2.5);
\draw[->] (-0.5, 1.414) -- (-1, 2.5);
\draw[->] (0.5, -1.414) -- (1, -2.5);
\draw[->] (-0.5, -1.414) -- (-1, -2.5);

\draw[->] (1, 1.118) -- (2, 2);
\draw[->] (-1, 1.118) -- (-2, 2);
\draw[->] (1, -1.118) -- (2, -2);
\draw[->] (-1, -1.118) -- (-2, -2);

\draw[->] (1.3, 0.748) -- (2.5, 1);
\draw[->] (-1.3, 0.748) -- (-2.5, 1);
\draw[->] (1.3, -0.748) -- (2.5, -1);
\draw[->] (-1.3, -0.748) -- (-2.5, -1);

\draw[->] (1.5, 0) -- (2.7, 0);
\draw[->] (-1.5, 0) -- (-2.7, 0);
\draw[->] (0, 1.5) -- (0, 2.7);
\draw[->] (0, -1.5) -- (0, -2.7);


\draw (7,0) circle (1.5cm);

\node at (7,0) {$\bullet$}; 
\node at (7,0)[below] {$a$}; 

\draw[->] (7.5, 1.414) -- (7.2, 0.7);
\draw[->] (6.5, 1.414) -- (6.7, 0.7);
\draw[->] (7.5, -1.414) -- (7.2, -0.7);
\draw[->] (6.5, -1.414) -- (6.7, -0.7);

\draw[->] (8, 1.118) -- (7.5, 0.5);
\draw[->] (6, 1.118) -- (6.5, 0.5);
\draw[->] (8, -1.118) -- (7.5, -0.5);
\draw[->] (6, -1.118) -- (6.5, -0.5);

\draw[->] (8.3, 0.748) -- (7.6, 0.3);
\draw[->] (8.3, -0.748) -- (7.6, -0.3);
\draw[->] (5.7, 0.748) -- (6.2, 0.3);
\draw[->] (5.7, -0.748) -- (6.2, -0.3);

\draw[->] (7, 1.5) -- (7, 0.7);
\draw[->] (7, -1.5) -- (7, -0.7);
\draw[->] (8.5, 0) -- (7.7, 0);
\draw[->] (5.5, 0) -- (6.2, 0);

\end{tikzpicture} 
\caption{Campo radial saindo e campo radial entrando: $I(v,a) = +1$.\hfill\break
\label{figura28}}
\end{figure}
\end{center}

\begin{center}
\begin{figure}[h!]
\begin{tikzpicture} 


\draw (0,0) circle (1.5cm);

\node at (0,0) {$\bullet$}; 
\node at (0,0)[below] {$a$}; 

\draw[->] (0.5, 1.414) -- (1.5, 1.414);
\draw[->] (-0.5, 1.414) -- (0.2, 1.414);
\draw[->] (0.5, -1.414) -- (1.5, -1.414);
\draw[->] (-0.5, -1.414) -- (0.2, -1.414);

\draw[->] (1, 1.118) -- (2, 1.118);
\draw[->] (-1, 1.118) -- (0, 1.118);
\draw[->] (1, -1.118) -- (2, -1.118);
\draw[->] (-1, -1.118) -- (0, -1.118);

\draw[->] (1.3, 0.748) -- (2.3, 0.748);
\draw[->] (-1.3, 0.748) -- (-0.3, 0.748);
\draw[->] (1.3, -0.748) -- (2.3, -0.748);
\draw[->] (-1.3, -0.748) -- (-0.3, -0.748);

\draw[->] (1.5, 0) -- (2.3, 0);
\draw[->] (-1.5, 0) -- (-0.5, 0);
\draw[->] (0, 1.5) -- (1, 1.5);
\draw[->] (0, -1.5) -- (0.8, -1.5);

\end{tikzpicture} 
\caption{Campo de \'indice nulo.}\label{figura30}
\end{figure}
\end{center}


\begin{figure}[H]
\begin{center}
\begin{tikzpicture}[baseline]
\node at (0,0) {$\bullet$};
 \node at (0.1,-0.3) {$a$};
\node at (8,0) {$\bullet$};
 \node at (8.14,-0.3) {$0$};

 \draw (0,0) circle (2);
 \draw (8,0) circle (2);

 \draw[->,  thick ] (3.5,0) -- (4.5,0);
 \node at (4,-0.4) {$\gamma_v$};
 
 
   \draw [->, very thick] (1.7cm,0cm) arc (0:34:2cm);
\draw [->, very thick] (10.2cm,-0.2cm) arc (0:-32:2cm);
 
 \path (0*22.5:2) coordinate (A0);
\node at (A0) {$\bullet$};
\path (1*22.5:2) coordinate (A1);
\node at (A1) {$\bullet$};
 \path (2*22.5:2) coordinate (A2);
 \node at (A2) {$\bullet$};
 \path (3*22.5:2) coordinate (A3);
 \node at (A3) {$\bullet$};
  \path (4*22.5:2) coordinate (A4);
 \node at (A4) {$\bullet$};
   \path (5*22.5:2) coordinate (A5);
 \node at (A5) {$\bullet$};
 \path (6*22.5:2) coordinate (A6);
\node at (A6) {$\bullet$};
\path (7*22.5:2) coordinate (A7);
\node at (A7) {$\bullet$};
 \path (8*22.5:2) coordinate (A8);
 \node at (A8) {$\bullet$};
 \path (9*22.5:2) coordinate (A9);
 \node at (A9) {$\bullet$};
  \path (10*22.5:2) coordinate (A10);
 \node at (A10) {$\bullet$};
   \path (11*22.5:2) coordinate (A11);
 \node at (A11) {$\bullet$};
\path (12*22.5:2) coordinate (A12);
 \node at (A12) {$\bullet$};
\path (13*22.5:2) coordinate (A13);
 \node at (A13) {$\bullet$};
   \path (14*22.5:2) coordinate (A14);
 \node at (A14) {$\bullet$};
\path (15*22.5:2) coordinate (A15);
 \node at (A15) {$\bullet$};

\node at (2.25,0.25) {$x_0$};
\node at (1.7,1.7) {$x_1$};
\node at (0,2.3) {$x_2$};
\node at (-1.7,1.7) {$x_3$};
\node at (-1.6,0) {$x_4$};
\node at (-1.7,-1.7) {$x_5$};
\node at (1.7,-1.7) {$x_6$};

\node at (10.7,0) {$\gamma_v(x_0)$};
\node at (9.8,-1.7) {$\gamma_v(x_1)$};
\node at (8.2,-2.3) {$\gamma_v(x_2)$};
\node at (6.2,-1.7) {$\gamma_v(x_3)$};
\node at (5.4,0) {$\gamma_v(x_4)$};
\node at (6.15,1.5) {$\gamma_v(x_5)$};
\node at (10,1.7) {$\gamma_v(x_6)$};
 
  \draw[->] (2,0) -- (2.8,0); 
  \begin{scope}[shift={(A1)},rotate=337.5];
  \draw[-> ] (A1) -- (0.8,0);
  \end{scope}
   \begin{scope}[shift={(A2)},rotate=315];
    \draw[-> ] (A2) -- (0.8,0);
  \end{scope}
  \begin{scope}[shift={(A3)},rotate=292.5];
    \draw[-> ] (A3) -- (0.8,0);
  \end{scope}
 \begin{scope}[shift={(A4)},rotate=270];
    \draw[-> ] (A4) -- (0.8,0);
  \end{scope}
 \draw[->] (2,0) -- (2.8,0); 
  \begin{scope}[shift={(A5)},rotate=247.5];
  \draw[-> ] (A5) -- (0.8,0);
  \end{scope}
   \begin{scope}[shift={(A6)},rotate=225];
    \draw[-> ] (A6) -- (0.8,0);
  \end{scope}
  \begin{scope}[shift={(A7)},rotate=202.5];
    \draw[-> ] (A7) -- (0.8,0);
  \end{scope}
 \begin{scope}[shift={(A8)},rotate=180];
    \draw[-> ] (A8) -- (0.8,0);
  \end{scope}
  \begin{scope}[shift={(A9)},rotate=157.5];
    \draw[-> ] (A9) -- (0.8,0);
  \end{scope}
  \begin{scope}[shift={(A10)},rotate=135];
    \draw[-> ] (A10) -- (0.8,0);
  \end{scope}
 \begin{scope}[shift={(A11)},rotate=112.5];
    \draw[-> ] (A11) -- (0.8,0);
  \end{scope}
\begin{scope}[shift={(A12)},rotate=90];
    \draw[-> ] (A12) -- (0.8,0);
  \end{scope}
  \begin{scope}[shift={(A13)},rotate=67.5];
    \draw[-> ] (A13) -- (0.8,0);
  \end{scope}
 \begin{scope}[shift={(A14)},rotate=45];
    \draw[-> ] (A14) -- (0.8,0);
  \end{scope}
\begin{scope}[shift={(A15)},rotate=22.5];
    \draw[-> ] (A15) -- (0.8,0);
  \end{scope}

\draw[<-] (6.04,0) -- (8,0);
\begin{scope}[shift={(8,0)},rotate=45];
    \draw[-> ] (0,0) -- (1.98,0);
  \end{scope}
  \begin{scope}[shift={(8,0)},rotate=138];
    \draw[-> ] (0,0) -- (1.98,0);
  \end{scope}
\begin{scope}[shift={(8,0)},rotate=225];
    \draw[-> ] (0,0) -- (1.98,0);
  \end{scope}
  \begin{scope}[shift={(8,0)},rotate=270];
    \draw[-> ] (0,0) -- (1.98,0);
  \end{scope}
   \begin{scope}[shift={(8,0)},rotate=315];
    \draw[-> ] (0,0) -- (1.98,0);
  \end{scope}
    \begin{scope}[shift={(8,0)},rotate=0];
    \draw[-> ] (0,0) -- (1.98,0);
  \end{scope}
  
\end{tikzpicture}
\end{center}

\caption{Campo de \'indice $I(v,a)=-1$.}\label{figura34}
\end{figure}

\begin{figure}[H]
\begin{center}
\begin{tikzpicture}[baseline]
\node at (0,0) {$\bullet$};
 \node at (0.1,-0.3) {$a$};
\node at (8,0) {$\bullet$};
 \node at (8.02,-0.3) {$0$};
 
 \draw (0,0) circle (2);
 \draw (8,0) circle (2);

 \draw[->,  thick ] (3.5,0) -- (4.5,0);
 \node at (4,-0.4) {$\gamma_v$};
 
 
   \draw [->, very thick] (1.7cm,0cm) arc (0:30:2cm);
\draw [->, very thick] (9.7cm,0.1cm) arc (0:30:2cm);
 
 \path (0*22.5:2) coordinate (A0);
\node at (A0) {$\bullet$};
\path (1*22.5:2) coordinate (A1);
\node at (A1) {$\bullet$};
 \path (2*22.5:2) coordinate (A2);
 \node at (A2) {$\bullet$};
 \path (3*22.5:2) coordinate (A3);
 \node at (A3) {$\bullet$};
  \path (4*22.5:2) coordinate (A4);
 \node at (A4) {$\bullet$};
   \path (5*22.5:2) coordinate (A5);
 \node at (A5) {$\bullet$};
 \path (6*22.5:2) coordinate (A6);
\node at (A6) {$\bullet$};
\path (7*22.5:2) coordinate (A7);
\node at (A7) {$\bullet$};
 \path (8*22.5:2) coordinate (A8);
 \node at (A8) {$\bullet$};
 \path (9*22.5:2) coordinate (A9);
 \node at (A9) {$\bullet$};
  \path (10*22.5:2) coordinate (A10);
 \node at (A10) {$\bullet$};
   \path (11*22.5:2) coordinate (A11);
 \node at (A11) {$\bullet$};
\path (12*22.5:2) coordinate (A12);
 \node at (A12) {$\bullet$};
\path (13*22.5:2) coordinate (A13);
 \node at (A13) {$\bullet$};
   \path (14*22.5:2) coordinate (A14);
 \node at (A14) {$\bullet$};
\path (15*22.5:2) coordinate (A15);
 \node at (A15) {$\bullet$};

\node at (2.25,0.25) {$x_0$};
\node at (2.25,0.8) {$x_1$};
\node at (0.7,1.5) {$x_2$};
\node at (-0.7,1.5) {$x_3$};
\node at (-2.2,0.8) {$x_4$};
\node at (-2.4,0) {$x_5$};
\node at (-2.2,-0.8) {$x_6$};
\node at (-0.7,-1.5)  {$x_7$};
\node at (0.7,-1.5) {$x_8$};
\node at (1.5,-0.8) {$x_9$};

\node at (10.8,0) {$\gamma_v(x_0) =$};
\node at (11.2,-0.5) {$\gamma_v(x_5)$};
\node at (10,1.7) {$\gamma_v(x_1)=$};
\node at (10.4,1.2) {$\gamma_v(x_6) $};
\node at (5.7,2.1) {$\gamma_v(x_2)=$};
\node at (6.1,1.6) {$\gamma_v(x_7)$};
\node at (5.8,-1.4) {$\gamma_v(x_3)=$};
\node at (6.2,-1.9) {$\gamma_v(x_8)$};
\node at (9.9,-1.7) {$\gamma_v(x_4)=$};
\node at (10.3,-2.2) {$\gamma_v(x_9)$};

  \draw[->] (2,0) -- (2.8,0); 
  \begin{scope}[shift={(A1)},rotate=45];
  \draw[-> ] (A1) -- (0.8,0);
  \end{scope}
   \begin{scope}[shift={(A2)},rotate=90];
    \draw[-> ] (A2) -- (0.8,0);
  \end{scope}
  \begin{scope}[shift={(A3)},rotate=135];
    \draw[-> ] (A3) -- (0.8,0);
  \end{scope}
 \begin{scope}[shift={(A4)},rotate=180];
    \draw[-> ] (A4) -- (0.8,0);
  \end{scope}
 \draw[->] (2,0) -- (2.8,0); 
  \begin{scope}[shift={(A5)},rotate=225];
  \draw[-> ] (A5) -- (0.8,0);
  \end{scope}
   \begin{scope}[shift={(A6)},rotate=270];
    \draw[-> ] (A6) -- (0.8,0);
  \end{scope}
  \begin{scope}[shift={(A7)},rotate=315];
    \draw[-> ] (A7) -- (0.8,0);
  \end{scope}
 \begin{scope}[shift={(A8)},rotate=360];
    \draw[-> ] (A8) -- (0.8,0);
  \end{scope}
  \begin{scope}[shift={(A9)},rotate=45];
    \draw[-> ] (A9) -- (0.8,0);
  \end{scope}
  \begin{scope}[shift={(A10)},rotate=90];
    \draw[-> ] (A10) -- (0.8,0);
  \end{scope}
 \begin{scope}[shift={(A11)},rotate=135];
    \draw[-> ] (A11) -- (0.8,0);
  \end{scope}
\begin{scope}[shift={(A12)},rotate=180];
    \draw[-> ] (A12) -- (0.8,0);
  \end{scope}
  \begin{scope}[shift={(A13)},rotate=225];
    \draw[-> ] (A13) -- (0.8,0);
  \end{scope}
 \begin{scope}[shift={(A14)},rotate=270];
    \draw[-> ] (A14) -- (0.8,0);
  \end{scope}
\begin{scope}[shift={(A15)},rotate=315];
    \draw[-> ] (A15) -- (0.8,0);
  \end{scope}

\begin{scope}[shift={(8,0)},rotate=45];
    \draw[-> ] (0,0) -- (1.98,0);
  \end{scope}
  \begin{scope}[shift={(8,0)},rotate=135];
    \draw[-> ] (0,0) -- (1.98,0);
  \end{scope}
\begin{scope}[shift={(8,0)},rotate=225];
    \draw[-> ] (0,0) -- (1.98,0);
  \end{scope}
    \begin{scope}[shift={(8,0)},rotate=315];
    \draw[-> ] (0,0) -- (1.98,0);
  \end{scope}
    \begin{scope}[shift={(8,0)},rotate=0];
    \draw[-> ] (0,0) -- (1.98,0);
  \end{scope}
  
\end{tikzpicture}
\end{center}\caption{Campo de \'indice $I(v,a)=+2$.}\label{figura35}
\end{figure}

\begin{remark}\label{remarque}
Observamos que se um campo de vetores $v$ definido sobre a borda ${\Sp}^1_a$ de ${B}_a$
se estende sem singularidades dentro de ${B}_a$, ent\~ao o seu \'indice $I(v,a)$ vale $0$. Esta observa\c c\~ao ser\'a \'util para o que se segue. 
\end{remark}

\subsection{Singularidades de campos de vetores definidos em $\R^3$}\label{indices2}\hfill 

Na segunda etapa da prova do Teorema de Poincar\'e-Hopf,  vamos usar uma propriedade 
particular do \'indice de campos de vetores particulares em $\R^3$.
O \'indice de um campo de vetores em $\R^3$ define-se da mesma maneira do que o 
\'indice de um campo de vetores em $\R^2$. Observamos que esta defini\c c\~ao 
\'e a mesma em um espa\c co euclideano de dimens\~ao qualquer: 

Seja $w$ um campo de vetores em $\R^3$ com uma singularidade isolada em $0$. 
Isso significa que podemos considerar uma pequena bola 
$B_0$ centrada em $0$ tal que ao longo da esfera  $\Sp^2_0$, que \'e a borda de $B_0$, 
o campo de vetores $w$ n\~ao tem singularidades.

Para cada ponto $x$ de $\Sp^2_0$, consideramos o vetor 
$$\widetilde w(x) = \frac{w(x)}{\Vert w(x) \Vert}$$ 
em um outro exemplar de $\R^3$, com mesma orienta\c c\~ao e chamado de 
$\widetilde{\R}^3$. 
Aqui  $\Vert w(x) \Vert$ \'e a norma euclidiana de $w(x)$. 

O vetor $\widetilde w(x)$ \'e de tamanho 1 e a sua extremidade, $\gamma_w(x)$, pertence 
\`a esfera ${\Sp}^2$ de raio 1 e de centro \`a origem de $\widetilde{\R}^3$. 

A aplica{\c c}\~ao $\gamma_w: \Sp^2_0 \to {\Sp}^2$ definida por $x \mapsto  \gamma_w(x)$ 
\'e chamada de {\it aplica{\c c}\~ao de Gauss}. 

Como no caso anterior \S\ref{grau2}, 
um ponto $a\in  \Sp^2_0$ tal que a aplica\c c\~ao derivada 
$$d(\gamma_w)_a : T_a( \Sp^2_0) \to T_{(\gamma_w (a)) }\Sp^2$$
tem o posto menor do que 2 \'e chamado de  ponto cr\'itico de $\gamma_w$. 
Os pontos regulares e valores cr\'iticos e regulares est\~ao definidos da mesma maneira
do que \S\ref{grau2}. 

Em um valor regular $y$ de $\gamma_w$, podemos considerar o n\'umero inteiro 
$$ {\rm grau}(\gamma_w;y) = \sum_{x\in \gamma_w^{-1}(y)} {\rm sign} \; d(\gamma_w)_x.$$
Como na se\c c\~ao  \S\ref{grau2},   o n\'umero $ {\rm grau}(\gamma_w;y) $ n\~ao depende do valor 
regular de $\gamma_w$, ele \'e denotado por $ {\rm grau}(\gamma_w)$ e  \'e chamado {\it grau} de $\gamma_w$.
0 \'indice $I(w,0)$ do campo $w$ em $\R^3$ com singularidade isolada em $0$ \'e bem definido como 
o grau de $\gamma_w$.
\begin{lemma}\label{extension}
Seja $v$ um campo de vetores em $\R^2$ com singularidade isolada em $0_{\R^2}$, de \'indice $I(v,0)$ 
e seja $t$ um campo de vetores em $\R$ com singularidade isolada em $0_{\R}$, saindo radialmente do ponto $0$. 
A soma $w = v\oplus t$ \'e um campo de vetores em $\R^3 = \R^2 \oplus \R$ 
tal que $w((x,y),z) = (v(x,y), 0_{\R}) + (0_{\R^2}, t(z))$, com singularidade isolada 
em $0_{\R^3}=(0_{\R^2} , 0_{\R})$ de \'indice igual \`a $I(v,0)$. 
\end{lemma}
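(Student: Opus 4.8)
The plan is to verify the two assertions separately: first that $w$ has an isolated zero at the origin, and then that its index equals $I(v,0)$. The first is immediate. Since $w((x,y),z)=(v(x,y),t(z))$, we have $w=0$ exactly when $v(x,y)=0$ and $t(z)=0$; as $v$ has an isolated zero at $0_{\R^2}$ and the radial field $t$ vanishes only at $0_{\R}$, the origin is the only zero of $w$ in a small ball, so the sphere $\Sp^2_0$ can be chosen to avoid all other singularities. For the index I would compute $I(w,0)={\rm grau}(\gamma_w)$ directly from the definition of degree (\S\ref{grau2}), picking one convenient regular value and summing the signs of $d(\gamma_w)$ over its preimages.

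The key observation is that $t(0)=0$, so on the equator $\{z=0\}$ of $\Sp^2_0$ the field reduces to $w=(v(x,y),0)$, which is horizontal; hence $\gamma_w$ carries this equator into the equator of the target $\Sp^2$, and there it coincides with the planar Gauss map $\gamma_v:\Sp^1_a\to\Sp^1$, whose degree is $I(v,0)$. I would therefore take as regular value a point $y_0$ on the target equator that is a regular value of $\gamma_v$. Since $y_0$ is horizontal, $\gamma_w(p)=y_0$ forces $w(p)$ to be horizontal, i.e.\ $t(z)=0$, hence $z=0$; thus every preimage lies on the equator and $\gamma_w^{-1}(y_0)=\gamma_v^{-1}(y_0)$.

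It remains to compare, at each such point $p$, the sign of the $2\times 2$ derivative $d(\gamma_w)_p$ with the sign of the $1\times 1$ derivative $d(\gamma_v)_p$. Using near-equatorial coordinates $(s,z)$ on $\Sp^2_0$ (arc length $s$ along the equator, height $z$) and the matching coordinates (equatorial angle, latitude) on the target, a first-order computation shows the Jacobian of $\gamma_w$ at $p$ is block diagonal: along the equator it acts exactly as $d(\gamma_v)_p$, while in the transverse $z$-direction, because $t$ points radially outward ($t'(0)>0$), increasing $z$ tilts $w$ upward and so strictly increases the target latitude. The transverse block is thus positive, both blocks are nonzero (so $y_0$ is indeed a regular value of $\gamma_w$ as well), and ${\rm sign}\,d(\gamma_w)_p={\rm sign}\,d(\gamma_v)_p$. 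Summing over $\gamma_w^{-1}(y_0)=\gamma_v^{-1}(y_0)$ yields
\[
I(w,0)={\rm grau}(\gamma_w)=\sum_{p}{\rm sign}\,d(\gamma_w)_p=\sum_{s}{\rm sign}\,d(\gamma_v)_s={\rm grau}(\gamma_v)=I(v,0).
\]

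The main obstacle is the orientation bookkeeping in this last step: one must fix the orientations of $\Sp^2_0$ and of the target $\Sp^2$ (together with the induced equator orientations used to define $\gamma_v$) consistently, so that the block-diagonal Jacobian really has determinant sign equal to the product of its two diagonal signs and so that the transverse block counts as $+1$. Once the conventions are pinned down, the off-diagonal entries vanish to first order precisely because $\partial w/\partial s$ remains horizontal and $\partial w/\partial z=(0,t'(0))$ remains vertical at $z=0$, which is the only quantitative input the argument requires.
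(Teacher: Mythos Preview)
Your proof is correct and follows the same strategy as the paper: both compute $\mathrm{grau}(\gamma_w)$ at an equatorial regular value, identify its $\gamma_w$-preimage with the $\gamma_v$-preimage, and match the signs via the block structure coming from the $\R^2\oplus\R$ splitting. The paper carries this out on the ``can'' $(\Sp^1\times I)\cup(D^2\times\{\pm1\})$ rather than the round sphere and is considerably sketchier about the Jacobian step; your version is more explicit, though your parenthetical claim $t'(0)>0$ is not literally implied by ``radially outgoing'' (consider $t(z)=z^3$) and is cleanest to secure by first homotoping $t$ to the standard field $z\mapsto z$, which leaves $\gamma_w$ unchanged up to homotopy and hence preserves the degree.
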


Este resultado, j\'a usado em \cite{Sc2},  aparece no livro \cite{BSS} como ``Lemma 2.3.3'', sem prova. A prova que 
providenciamos aqui usa o conceito de grau. Observamos que a hip\'otese sobre o campo $t$ implica
que o \'indice $I(t,0_{\R}) $ vale $+1$. 

\begin{proof}
Denotamos por $B_0$ a bola de centro $0$ e de raio $1$ em $\R^3$. A bola 
$B_0$ \'e homeomorfa \`a $D^2 \times I$, onde $D^2$ \'e um disco de dimens\~ao dois  e $I$ \'e o intervalo $[ -1, +1]$.
A borda de $B_0$, isto \'e, a esfera  $\Sp^2_0 $ \'e homeomorfa \`a lata $(\Sp^1 \times I ) \cup ( D^2 \times (\{ -1 \} \cup \{ +1 \}))$. 

Na lata, temos coordenadas  $((r,\alpha), \lambda)$ tais que: sobre $(\Sp^1 \times I )$, temos $r=1$, $\alpha\in [0, 2\pi[$ e $\lambda \in [-1, +1]$; sobre $D^2 \times 
(\{ -1 \} \cup \{ +1 \})$ temos $r\in [0, r]$, $\alpha \in [0, 2\pi[$ e $\lambda \in \{-1\} \cup \{ +1\}$. 
Observamos que, quando $r=0$, a coordenada $\alpha$ n\~ao \'e definida. 

Construimos um campo de vetores $w$ sobre $(\Sp^1 \times I ) \cup ( D^2 \times 
(\{ -1 \} \cup \{ +1 \}))$ da  seguinte maneira:\\
a) sobre $(\Sp^1 \times I ) $, no ponto $x= (1,\alpha, \lambda)$, o campo $w(x)$ vale $v(1,\alpha) + t( \lambda )$;\\
b) sobre $D^2 \times  \{ -1 \}$ (respectivamente,  $D^2 \times \{ +1 \}$), no ponto 
$x= (r, \alpha, k)$ o campo $w(x)$  \'e $v(r,\alpha) + t(k)$ com $k = -1$ ou $k= +1$,  respectivamente.

O campo $w$ assim definido \'e o que \'e chamado de prolongamento radial de $v$ em \cite{Sc2}. 
O campo $w$ tem uma singularidade isolada na origem  $0_{\R^3}=(0_{\R^2} , 0_{\R})$. 

Consideramos em $\R^3$ a orienta\c c\~ao dada pela orienta\c c\~ao de $\R^2$ seguida pela 
orienta\c c\~ao de $\R$. Observamos que, como o campo de vetores $t$ \'e radial saindo de $0_{\R}$,
este campo conserva a orienta\c c\~ao de $\R$. Ent\~ao, localmente, a orienta\c c\~ao induzida 
por $w$ em $\R^3$ \'e a mesma que a orienta\c c\~ao induzida por $v$ em $\R^2$, 
 seja positiva ou seja negativa   
(veja Observa\c c\~ao \ref{orientation}). 

Em cada ponto $x$ de $\Sp^2_0$ (ou da lata), temos $w(x)\ne 0$. Como no caso de dimens\~ao 2, podemos considerar a aplica\c c\~ao de Gauss 
$$\gamma_w : \Sp^2_0 \to \Sp^2,  \qquad \gamma_w(x) \text{ \'e a  extremidade  do  vetor} 
\widetilde w(x) = \frac{w(x)}{\Vert w(x) \Vert}.$$

Seja $\gamma_v$ a aplica\c c\~ao de Gauss correspondente ao campo $v$,
definida como na se\c cao \ref{grau2}. 
Um valor regular de $\gamma_w$ tamb\'em \'e um valor regular de $\gamma_v$. 
Neste valor,  as orienta\c c\~oes de $\gamma_v$ e $\gamma_w$ coincidem. 
Logo, temos
$$grau(\gamma_w) = grau(\gamma_v).$$
Portanto, os \'indices $I(w,0_{\R^3})=grau(\gamma_w)$ e $I(v,0_{\R^2})=grau(\gamma_v)$ coincidem.
\end{proof}

\section{Teorema de Poincar\'e-Hopf }\hfill

O Teorema de Poincar\'e-Hopf foi provado por Poincar\'e no ano 1885 
no caso de superf\'icies, depois  por Hopf, no ano 1927, para o caso de variedades suaves 
de dimens\~ao maior. 

\begin{theorem}
{\it Seja $v$ um campo de vetores com singularidades isoladas, tangente a uma superf\'icie $\Su$ compacta, suave, sem borda e n\~ao necessariamente orientada. Temos 
$$\sum_{a_i} I(v, a_i) = \chi(\Su),$$
onde os pontos $a_i$ s\~ao os pontos singulares de $v$. }
\end{theorem}

Antes de oferecer uma ideia para a prova, voltamos aos exemplos que j\'a consideramos: um exemplo de campos de vetores 
sobre a esfera $\Sp^2$ \'e dado na Figura \ref{sphere}. Neste exemplo, o campo tem 
singularidades de \'indice $+1$ nos polos norte e sul (campos radiais saindo e entrando, respectivamente, veja Figura \ref{figura28}). Tem-se que a 
caracter\'istica de Euler-Poincar\'e da esfera \'e 2, que tamb\'em \'e a soma dos \'indices nos pontos singulares. 

No toro, existem campos de vetores tangentes sem nenhuma singularidade (veja Figura \ref{tore}).
Ent\~ao a soma $\sum_{a_i} I(v, a_i)$ vale $0$, o que \'e a caracter\'istica de 
Euler-Poincar\'e do toro. 

No plano projetivo $\PP^2$, que n\~ao \'e  orient\'avel, a Figura  \ref{AF} providencia um campo de vetores tangentes, com somente uma singularidade de \'indice $+1$. Isto \'e a caracter\'istica de 
Euler-Poincar\'e do espa\c co projetivo. 

\begin{figure}[h!]
\begin{center}
\begin{tikzpicture}[scale=0.8]

 \draw (0,0) circle (3);
 
 \node at (0,3.3) {$N$};
 \node at (0,3) {$\bullet$};
  \node at (0,-3.3) {$S$};
 \node at (0,-3) {$\bullet$};
 
 \draw [->] (-1.5,2.3) -- (-0.9,2.7);
  \draw [->] (-0.65,2.1) -- (-0.45,2.7);
    \draw [->] (0,2) -- (0,2.6);
      \draw [->] (0.65,2.1) -- (0.45,2.7);
    \draw [->] (1.5,2.3) -- (0.9,2.7);
    
   \draw [->] (-1.6,1.3) -- (-1,2.2);
  \draw [->] (-0.65,1.13) -- (-0.4,2.05);
   \draw [->] (0.65,1.13) -- (0.4,2.05);
 \draw [->] (1.6,1.3) -- (1,2.2);

\draw [->] (-3,0) -- (-3,1.3);
\draw [->] (-2.2,0) -- (-2,1.3);
 \draw [->] (-1.2,0) -- (-1,1.2);
 \draw [->] (0,0) -- (0,1.2);
  \draw [->] (1.2,0) -- (1,1.2);
  \draw [->] (2.2,0) -- (2,1.3);
  \draw [->] (3,0) -- (3,1.3);
  
\draw[<-] (-2.6,-0.4)--  (-2.2,-1.3);
\draw[<-] (-1.6,-0.5)--  (-1.4,-1.4);
\draw[<-] (-0.6,-0.5)--  (-0.6,-1.4);
\draw[<-] (0.5,-0.5)--  (0.4,-1.4);
\draw[<-] (1.7,-0.4)--  (1.4,-1.3);
\draw[<-] (2.6,-0.4)--  (2.2,-1.3);

\draw[<-] (-2.6,-1.8)--  (-1.8,-2.4);
\draw[<-] (-1.5,-1.8)--  (-1,-2.4);
\draw[<-] (0,-1.8)--  (0,-2.4);
\draw[<-] (1.5,-1.8)--  (1,-2.4);
   \draw[<-] (2.6,-1.8)--  (1.8,-2.4);
   
 \draw[<-] (-0.5,-2.35)--  (-0.2,-2.7);
\draw[<-] (-1.5,-2.35)--  (-1,-2.7);
 \draw[<-] (0.5,-2.35)--  (0.2,-2.7);
\draw[<-] (1.5,-2.35)--  (1,-2.7);
    
\end{tikzpicture}
\medskip
\caption{Campo de vetores tangentes a esfera $\Sp^2$. Temos $ I(v,N)=I(v,S)=+1$.}\label{sphere}
\end{center}
\end{figure}

\begin{figure}[h!]
\begin{center}
\begin{tikzpicture}[scale=0.85]




\draw (0,0) arc (0:360: 3 and 2);
\draw (-4.65,0.45) arc (180:360: 1.7 and 0.9);    
\draw (-1.45,0) arc (0:180: 1.5 and 0.8);   


 \draw[->] (-3,-0.75) -- (-2.25,-0.65); 
  \draw[->] (-3.1,-1.25) -- (-2.4,-1.15);
 \draw[->] (-3,-1.75) -- (-2.3,-1.65); 
 
 \draw[->]  (-2.35,-0.95) -- (-1.7,-0.75);
  \draw[->]  (-2.25,-1.4) -- (-1.6,-1.2);

  \draw[->] (-1.8,-0.5) -- (-1.35,-0.2); 
  \draw[->] (-1.55,-1) -- (-1,-0.6);
 \draw[->] (-1.3,-1.3) -- (-0.8,-0.9); 
 
 \draw[->] (-1,-0.05) -- (-0.95,0.45); 
  \draw[->] (-0.7,-0.3) -- (-0.7,0.2);
 \draw[->] (-0.4,-0.15) -- (-0.43,0.37); 
 
  \draw[->] (-0.5,0.75) -- (-0.82,1.12); 
  \draw[->] (-0.95,0.8) -- (-1.5,1.2);
 \draw[->] (-1.5,0.75) -- (-2,1); 

\draw[->] (-1.5,1.5) -- (-2.25,1.75); 
\draw[->] (-2.5,1.5) -- (-3.12,1.5); 
\draw[->] (-2.75,1.12) -- (-3.5,1.1); 

\draw[->] (-3,1.75) -- (-3.6,1.72); 
\draw[->] (-3.75,1.4) -- (-4.45,1.1); 

\draw[->] (-4.12,0.8) -- (-4.5,0.6); 
\draw[->] (-4.5,1.4) -- (-5.12,1); 

\draw[->] (-4.82,0.75) -- (-5,0.25); 
\draw[->] (-5.5,0.5) -- (-5.6,-0.12); 

\draw[->] (-5.25,0) -- (-5,-0.5); 

\draw[->] (-4.62,-0.12) -- (-4.12,-0.62); 
\draw[->] (-5.1,-0.9) -- (-4.5,-1.3); 

\draw[->] (-3.82,-0.62) -- (-3.25,-0.82); 
\draw[->] (-4.5,-0.82) -- (-3.75,-1.15); 
\draw[->] (-4,-1.5) -- (-3.25,-1.62);

 
\draw (9,0) arc (0:360: 3 and 2);
\draw (4.45,0.45) arc (180:360: 1.7 and 0.9);    
\draw (7.63,0) arc (0:180: 1.5 and 0.8); 


 \draw[->] (4.8,-1.5) -- (5,-1); 
 \draw[->] (5.5,-1.2) -- (5.6,-0.75); 
 \draw[->] (5.8,-1.5) -- (5.85,-1);
 \draw[->] (6.2,-1.5) -- (6.2,-1);
  \draw[->] (6.75,-1.7) -- (6.67,-1.2);
  
   \draw[->] (7,-1.12) -- (6.9,-0.75); 
 \draw[->] (7.5,-1.5) -- (7.3,-1.1); 
 \draw[->] (7.75,-0.75) -- (7.45,-0.3);
 \draw[->] (8.65,-0.75) -- (8.25,-0.5);
  \draw[->] (8.5,-0.2) -- (8,-0.05);

  \draw[->] (8.5,0.7) -- (7.9,0.6);
    \draw[->] (7.5,1.4) -- (7.3,0.9);
     \draw[->] (6.75,1.8) -- (6.7,1.3);
         \draw[->] (6.4,1.5) -- (6.4,1);
         
   \draw[->] (5.75,1.75) -- (5.82,1.25); 
 \draw[->] (5,1.5) -- (5.32,1); 
 \draw[->] (4.32,1.32) -- (4.7,0.85);
 \draw[->] (3.25,0.35) -- (4.05,0.25);
  \draw[->] (4,-0.5) -- (4.5,-0.2);

 \draw[->] (4,-1.25) -- (4.32,-0.75);
 \draw[->] (4.75,-0.75) -- (5.05,-0.32);
           
\end{tikzpicture}
\caption {Campos de vetores tangentes ao toro $\T$ : 
Campo tangente as ``paralelas''  e campo tangente as ``meridianas''. N\~ao h\'a singularidades.}\label{tore}\end{center}
\end{figure}

\begin{figure}[h!]
\begin{center}
\begin{tikzpicture}[scale=0.85]

 \node at (0,0) {$\bullet$};
  \node at (0.1,0.2) {$a$};

 \draw (0,0) circle (3);
   \draw [red] (3,0) arc (0:180:3);
 \draw[->] (3,0) -- (3,0.8);
 

\path (1*30:3) coordinate (1);
\begin{scope}[shift={(1)},rotate=90+1*30]; 
\draw[-> ] (1) -- (0.8,0);
  \end{scope}
 
\path (2*30:3) coordinate (2);
\begin{scope}[shift={(2)},rotate=90+2*30]; 
\draw[-> ] (2) -- (0.8,0);
  \end{scope}

\path (3*30:3) coordinate (3);
\begin{scope}[shift={(3)},rotate=90+3*30]; 
\draw[-> ] (3) -- (0.8,0);
  \end{scope}

\path (4*30:3) coordinate (4);
\begin{scope}[shift={(4)},rotate=90+4*30]; 
\draw[-> ] (4) -- (0.8,0);
  \end{scope}

\path (5*30:3) coordinate (5);
\begin{scope}[shift={(5)},rotate=90+5*30]; 
\draw[-> ] (5) -- (0.8,0);
  \end{scope}

\path (6*30:3) coordinate (6);
\begin{scope}[shift={(6)},rotate=90+6*30]; 
\draw[-> ] (6) -- (0.8,0);
  \end{scope}

\path (7*30:3) coordinate (7);
\begin{scope}[shift={(7)},rotate=90+7*30]; 
\draw[-> ] (7) -- (0.8,0);
  \end{scope}

\path (8*30:3) coordinate (8);
\begin{scope}[shift={(8)},rotate=90+8*30]; 
\draw[-> ] (8) -- (0.8,0);
  \end{scope}

\path (9*30:3) coordinate (9);
\begin{scope}[shift={(9)},rotate=90+9*30]; 
\draw[-> ] (9) -- (0.8,0);
  \end{scope}

\path (10*30:3) coordinate (10);
\begin{scope}[shift={(10)},rotate=90+10*30]; 
\draw[-> ] (10) -- (0.8,0);
  \end{scope}

\path (11*30:3) coordinate (11);
\begin{scope}[shift={(11)},rotate=90+11*30]; 
\draw[-> ] (11) -- (0.8,0);
  \end{scope}


 \draw[->] (2.4,0) -- (2.25,0.64);
 
\path (1*30:2.4) coordinate (1);
\begin{scope}[shift={(1)},rotate=110+1*30]; 
\draw[-> ] (1) -- (0.64,0);
  \end{scope}
  
\path (2*30:2.4) coordinate (2);
\begin{scope}[shift={(2)},rotate=110+2*30]; 
\draw[-> ] (2) -- (0.64,0);
  \end{scope}

\path (3*30:2.4) coordinate (3);
\begin{scope}[shift={(3)},rotate=110+3*30]; 
\draw[-> ] (3) -- (0.64,0);
  \end{scope}

\path (4*30:2.4) coordinate (4);
\begin{scope}[shift={(4)},rotate=110+4*30]; 
\draw[-> ] (4) -- (0.64,0);
  \end{scope}

\path (5*30:2.4) coordinate (5);
\begin{scope}[shift={(5)},rotate=110+5*30]; 
\draw[-> ] (5) -- (0.64,0);
  \end{scope}

\path (6*30:2.4) coordinate (6);
\begin{scope}[shift={(6)},rotate=110+6*30]; 
\draw[-> ] (6) -- (0.64,0);
  \end{scope}

\path (7*30:2.4) coordinate (7);
\begin{scope}[shift={(7)},rotate=110+7*30]; 
\draw[-> ] (7) -- (0.64,0);
  \end{scope}

\path (8*30:2.4) coordinate (8);
\begin{scope}[shift={(8)},rotate=110+8*30]; 
\draw[-> ] (8) -- (0.64,0);
  \end{scope}

\path (9*30:2.4) coordinate (9);
\begin{scope}[shift={(9)},rotate=110+9*30]; 
\draw[-> ] (9) -- (0.64,0);
  \end{scope}

\path (10*30:2.4) coordinate (10);
\begin{scope}[shift={(10)},rotate=110+10*30]; 
\draw[-> ] (10) -- (0.64,0);
  \end{scope}

\path (11*30:2.4) coordinate (11);
\begin{scope}[shift={(11)},rotate=105+11*30]; 
\draw[-> ] (11) -- (0.64,0);
  \end{scope}


 \draw[->] (1.8,0) -- (1.55,0.48);
 
\path (1*30:1.8) coordinate (1);
\begin{scope}[shift={(1)},rotate=130+1*30]; 
\draw[-> ] (1) -- (0.48,0);
  \end{scope}
  
\path (2*30:1.8) coordinate (2);
\begin{scope}[shift={(2)},rotate=130+2*30]; 
\draw[-> ] (2) -- (0.48,0);
  \end{scope}

\path (3*30:1.8) coordinate (3);
\begin{scope}[shift={(3)},rotate=130+3*30]; 
\draw[-> ] (3) -- (0.48,0);
  \end{scope}

\path (4*30:1.8) coordinate (4);
\begin{scope}[shift={(4)},rotate=130+4*30]; 
\draw[-> ] (4) -- (0.48,0);
  \end{scope}

\path (5*30:1.8) coordinate (5);
\begin{scope}[shift={(5)},rotate=130+5*30]; 
\draw[-> ] (5) -- (0.48,0);
  \end{scope}

\path (6*30:1.8) coordinate (6);
\begin{scope}[shift={(6)},rotate=130+6*30]; 
\draw[-> ] (6) -- (0.48,0);
  \end{scope}

\path (7*30:1.8) coordinate (7);
\begin{scope}[shift={(7)},rotate=130+7*30]; 
\draw[-> ] (7) -- (0.48,0);
  \end{scope}

\path (8*30:1.8) coordinate (8);
\begin{scope}[shift={(8)},rotate=130+8*30]; 
\draw[-> ] (8) -- (0.48,0);
  \end{scope}

\path (9*30:1.8) coordinate (9);
\begin{scope}[shift={(9)},rotate=130+9*30]; 
\draw[-> ] (9) -- (0.48,0);
  \end{scope}

\path (10*30:1.8) coordinate (10);
\begin{scope}[shift={(10)},rotate=130+10*30]; 
\draw[-> ] (10) -- (0.48,0);
  \end{scope}

\path (11*30:1.8) coordinate (11);
\begin{scope}[shift={(11)},rotate=130+11*30]; 
\draw[-> ] (11) -- (0.48,0);
  \end{scope}


 \draw[->] (1.2,0) -- (0.9,0.1);
 
\path (1*30:1.2) coordinate (1);
\begin{scope}[shift={(1)},rotate=150+1*30]; 
\draw[-> ] (1) -- (0.32,0);
  \end{scope}
  
\path (2*30:1.2) coordinate (2);
\begin{scope}[shift={(2)},rotate=150+2*30]; 
\draw[-> ] (2) -- (0.32,0);
  \end{scope}

\path (3*30:1.2) coordinate (3);
\begin{scope}[shift={(3)},rotate=150+3*30]; 
\draw[-> ] (3) -- (0.32,0);
  \end{scope}

\path (4*30:1.2) coordinate (4);
\begin{scope}[shift={(4)},rotate=150+4*30]; 
\draw[-> ] (4) -- (0.32,0);
  \end{scope}

\path (5*30:1.2) coordinate (5);
\begin{scope}[shift={(5)},rotate=150+5*30]; 
\draw[-> ] (5) -- (0.32,0);
  \end{scope}

\path (6*30:1.2) coordinate (6);
\begin{scope}[shift={(6)},rotate=150+6*30]; 
\draw[-> ] (6) -- (0.32,0);
  \end{scope}

\path (7*30:1.2) coordinate (7);
\begin{scope}[shift={(7)},rotate=150+7*30]; 
\draw[-> ] (7) -- (0.32,0);
  \end{scope}

\path (8*30:1.2) coordinate (8);
\begin{scope}[shift={(8)},rotate=150+8*30]; 
\draw[-> ] (8) -- (0.32,0);
  \end{scope}

\path (9*30:1.2) coordinate (9);
\begin{scope}[shift={(9)},rotate=150+9*30]; 
\draw[-> ] (9) -- (0.32,0);
  \end{scope}

\path (10*30:1.2) coordinate (10);
\begin{scope}[shift={(10)},rotate=150+10*30]; 
\draw[-> ] (10) -- (0.32,0);
  \end{scope}

\path (11*30:1.2) coordinate (11);
\begin{scope}[shift={(11)},rotate=150+11*30]; 
\draw[-> ] (11) -- (0.32,0);
  \end{scope}


 \draw[->] (0.6,0) -- (0.455,0);
 
\path (1*30:0.6) coordinate (1);
\begin{scope}[shift={(1)},rotate=170+1*30]; 
\draw[-> ] (1) -- (0.16,0);
  \end{scope}
  
\path (2*30:0.6) coordinate (2);
\begin{scope}[shift={(2)},rotate=170+2*30]; 
\draw[-> ] (2) -- (0.16,0);
  \end{scope}

\path (3*30:0.6) coordinate (3);
\begin{scope}[shift={(3)},rotate=170+3*30]; 
\draw[-> ] (3) -- (0.16,0);
  \end{scope}

\path (4*30:0.6) coordinate (4);
\begin{scope}[shift={(4)},rotate=170+4*30]; 
\draw[-> ] (4) -- (0.16,0);
  \end{scope}

\path (5*30:0.6) coordinate (5);
\begin{scope}[shift={(5)},rotate=170+5*30]; 
\draw[-> ] (5) -- (0.16,0);
  \end{scope}

\path (6*30:0.6) coordinate (6);
\begin{scope}[shift={(6)},rotate=170+6*30]; 
\draw[-> ] (6) -- (0.16,0);
  \end{scope}

\path (7*30:0.6) coordinate (7);
\begin{scope}[shift={(7)},rotate=170+7*30]; 
\draw[-> ] (7) -- (0.16,0);
  \end{scope}

\path (8*30:0.6) coordinate (8);
\begin{scope}[shift={(8)},rotate=170+8*30]; 
\draw[-> ] (8) -- (0.16,0);
  \end{scope}

\path (9*30:0.6) coordinate (9);
\begin{scope}[shift={(9)},rotate=170+9*30]; 
\draw[-> ] (9) -- (0.16,0);
  \end{scope}

\path (10*30:0.6) coordinate (10);
\begin{scope}[shift={(10)},rotate=170+10*30]; 
\draw[-> ] (10) -- (0.16,0);
  \end{scope}

\path (11*30:0.6) coordinate (11);
\begin{scope}[shift={(11)},rotate=170+11*30]; 
\draw[-> ] (11) -- (0.16,0);
  \end{scope}

\end{tikzpicture}
\end{center}
\caption{Campo de vetores tangentes ao plano projetivo $\PP^2$. Aqui, o plano projetivo \'e representado como um disco onde os pontos diametralmente opostos da borda s\~ao identificados (veja Figura \ref{figura17}). Temos $I(v,a)=+1$ (veja Figura \ref{figura28}). }\label{AF}.
\end{figure}

\pagebreak
Mostraremos o Teorema de Poincar\'e-Hopf no caso n\~ao-orient\'avel depois de estudar o caso orientado. 

\subsection{Caso orient\'avel}

A prova do Teorema de Poincar\'e-Hopf no caso orientado pode ser feita em duas etapas:

a) Exibir um campo de vetores $v$ tal que 
$$\sum_{a_i} I(v, a_i) = \chi(\Su).$$

b) Provar que a soma $\sum_{a_i} I(v, a_i) $ n\~ao depende do campo de vetores $v$ tangente a $\Su$ com singularidades $a_i$, nem do n\'umero de singularidades, isto \'e, 
para quaisquer campos de vetores $v$ e $v'$ com singularidades isoladas sobre $\Su$, ent\~ao 
$$\sum_{a_i} I(v, a_i) = \sum_{b_j} I(v', b_j),$$
onde $a_i$ e $b_j$ s\~ao as singularidades dos campos $v$ e $v'$, respectivamente.

\subsubsection{Primeira etapa}\hfill 

A primeira etapa exibe um campo de vetores com uma constru{\c c}\~ao bonita, devido a H. Hopf, 
que vamos descrever com detalhes. 

Seja $K$ uma triangula{\c c}\~ao de $\Su$. Para cada simplexo de dimens\~ao 2, vamos construir um campo, chamado de {campo de Hopf}, como seguinte:

Para cada simplexo $\sigma$ de $K$, escolhemos um baricentro denotado por $\widehat{\sigma}$. 

Um simplexo de dimens\~ao $n$, onde $n = 0, 1, 2$, ser\'a denotado por $\sigma^n$, e seu baricentro  denotado por $\widehat{\sigma}^n$.

\begin{figure}[h!]
\begin{center}
\begin{tikzpicture} [scale=1.2]

\draw (0,0) -- (6, 0) -- (2, 4) -- (0, 0);
\draw [dashed] (2.7, 1.2) -- (0,0);
\draw [dashed](2.7, 1.2) -- (6,0);
\draw [dashed] (2.7, 1.2) -- (2,4);
\draw [dashed] (2.7, 1.2) -- (4,2);
\draw [dashed] (2.7, 1.2) -- (1,2);
\draw [dashed] (2.7, 1.2) -- (3,0);
\node at  (0, 0)[below] {$\hat{\sigma}^0$};
\node at  (4, 2)[right] {$\hat{\sigma}^1$};
\node at  (2, 4)[above] {$\hat{\sigma}^0$};
\node at  (6,0)[right] {$\hat{\sigma}^0$};
\node at  (1, 2)[left] {$\hat{\sigma}^1$};
\node at  (3, 0)[below] {$\hat{\sigma}^1$};
\node at (2.9, 1.25) [above] {$\hat{\sigma}^2$};
\node at (2.7, 1.2)  {$\bullet$};

\end{tikzpicture}
\end{center}
\caption{Baricentros e decomposi\c c\~ao baric\^entrica.}\label{AA}
\end{figure}

A escolha de baricentros permite definir a decomposi{\c c}\~ao baric\^entrica de $K$, denotada por $K'$, cujos v\'ertices s\~ao todos baricentros $\widehat{\sigma}^0$,
 $\widehat{\sigma}^1$ e $\widehat{\sigma}^2$, os segmentos ser\~ao denotados por $[ \widehat{\sigma}^i, \widehat{\sigma}^j]$ com $i < j$ e tri\^angulos por 
$[ \widehat{\sigma}^0, \widehat{\sigma}^1, \widehat{\sigma}^2]$. 

As singularidades do campo $v$ ser\~ao  todos baricentros $\widehat{\sigma}^i$, $i = 0, 1, 2$, 
e somente estes pontos. O campo ser\'a tangente aos segmentos $[ \widehat{\sigma}^i, \widehat{\sigma}^j]$ 
da decomposi{\c c}\~ao baric\^entrica,  saindo de $\widehat{\sigma}^i$ e indo na dire{\c c}\~ao 
de $\widehat{\sigma}^j$ ($i < j$). Como o campo $v$ se anula nos pontos $\widehat{\sigma}^i$ 
e $\widehat{\sigma}^j$, ele est\'a crescendo saindo de $\widehat{\sigma}^i$ 
at\'e um  m\'aximo e depois 
ele decresce at\'e se anular no ponto $\widehat{\sigma}^j$ 
(Figura \ref{figura32}).

\begin{figure}[h!] 
\begin{tikzpicture} 

\draw (0,0) -- (6,2);
\node at  (0, 0)[above] {$\hat{\sigma}^i$};
\node at  (6, 2)[above] {$\hat{\sigma}^j$};
\node at (0,0)  {$\bullet$};
\node at (6,2)  {$\bullet$};

\draw[-> ,very thick] (0.38,0.14) -- (0.7,0.25);
\draw[-> ,very thick] (1.1,0.4) -- (1.6,0.56);
\draw[-> ,very thick] (2,0.71) -- (2.7,0.95);

\draw[-> ,very thick] (3,1) -- (4,1.333);
\draw[-> ,very thick] (4.5,1.49) -- (5.2,1.745);
\draw[-> ,very thick] (5.45,1.83) -- (5.8,1.95);

\end{tikzpicture}
\caption{O campo de Hopf ao longo de $[ \widehat{\sigma}^i, \widehat{\sigma}^j]$.}\label{figura32}
\end{figure}

No tri\^angulo  $[ \widehat{\sigma}^0, \widehat{\sigma}^1, \widehat{\sigma}^2]$, o campo \'e constru\'ido de tal maneira a ser cont\'inuo e satisfazer as condi{\c c}\~oes anteriores  (Figura \ref{figura33}).

\begin{figure}[h!]
\begin{center}

\begin{tikzpicture}[scale=1.2]


\draw [dashed](0,0) -- (5,5);
\draw (0,0) -- (6,1);
\draw [dashed](5,5) -- (6,1);

\node at  (0, 0)[left] {$\hat{\sigma}^0$};
\node at  (5,5)[right] {$\hat{\sigma}^2$};
\node at  (6,1)[right] {$\hat{\sigma}^1$};

\node at (0,0)  {$\bullet$};
\node at (5,5)  {$\bullet$};
\node at (6,1)  {$\bullet$};

\draw[thick,dotted] 
  (0,0) 
    .. controls  (4,2.5) .. 
  (5,5); 
  
  \draw[thick,dotted] 
  (0,0) 
    .. controls  (4.7,1.5) .. 
  (5,5);   

 \draw[thick,dotted] 
  (0,0) 
    .. controls  (5.7,1) .. 
  (5,5);   
  
   \draw [red, very thick] (5.73,2) arc (95:192:1);


\draw[-> ,very thick] (0.35,0.33) -- (0.6,0.6);
\draw[-> ,very thick] (0.38,0.18) -- (0.7,0.33);
\draw[-> ,very thick] (0.42,0.08) -- (0.8,0.155);

\draw[-> ,very thick] (0.85,0.85) -- (1.2,1.2);
\draw[-> ,very thick] (1,0.62) -- (1.4,0.83);
\draw[-> ,very thick] (1,0.3) -- (1.45,0.45);
\draw[-> ,very thick] (1.05,0.17) -- (1.5,0.27);

\draw[-> ,very thick] (1.4,1.4) -- (1.85,1.85);
\draw[-> ,very thick] (1.65,1.03) -- (2.2,1.33);
\draw[-> ,very thick] (2,0.645) -- (2.55,0.8);
\draw[-> ,very thick] (2,0.35) -- (2.6,0.43);

\draw[-> ,very thick] (2.4,2.4) -- (3,3);
\draw[-> ,very thick] (2.8,1.8) -- (3.5,2.21);
\draw[-> ,very thick] (3.4,1.2) -- (4.1,1.5);
\draw[-> ,very thick] (3.6,0.73) -- (4.35,0.9);
\draw[-> ,very thick] (3.2,0.53) -- (3.8,0.62);

\draw[-> ,very thick] (3.4,3.4) -- (3.85,3.85);
\draw[-> ,very thick] (3.9,2.8) -- (4.3,3.23);
\draw[-> ,very thick] (4.4,2) -- (4.7,2.5);
\draw[-> ,very thick] (4.9,1.38) -- (5.2,1.65);
\draw[-> ,very thick] (4.82,1.1) -- (5.21,1.25);
\draw[-> ,very thick] (4.82,0.8) -- (5.3,0.87);

\draw[-> ,very thick] (4.2,4.2) -- (4.45,4.45);
\draw[-> ,very thick] (4.5,3.8) -- (4.74,4.23);
\draw[-> ,very thick] (4.85,3.4) -- (4.95,3.8);
\draw[-> ,very thick] (4.95,4.2) -- (4.97,4.5);
\draw[-> ,very thick] (5.3,3) -- (5.3,3.37);
\draw[-> ,very thick] (5.2,1.8) -- (5.35,2.2);
\draw[-> ,very thick] (5.5,1.95) -- (5.49,2.3);

\draw[-> ,very thick] (5.3,3.8) -- (5.2,4.1);
\draw[-> ,very thick] (5.55,2.8) -- (5.45,3.17);
\draw[-> ,very thick] (5.75,2) -- (5.67,2.35);

\draw[-> ,very thick] (5.4,1.4) -- (5.56,1.7);

\draw[-> ,very thick] (5.3,1.08) -- (5.56,1.2);
\draw[-> ,very thick] (5.65,1.31) -- (5.67,1.58);
\draw[-> ,very thick] (5.6,0.95) -- (5.8,0.98);
\draw[-> ,very thick] (5.9,1.4) -- (5.85,1.6);

\end{tikzpicture}
\end{center}
\caption{O campo de Hopf no tri\^angulo  $[ \widehat{\sigma}^0, \widehat{\sigma}^1, \widehat{\sigma}^2].$ 
}\label{figura33}
\end{figure}

O campo de Hopf,  a partir de sua constru{\c c}\~ao, admite como singularidades os baricentros de todos os simplexos de $K$. 
\goodbreak

A situa{\c c}\~ao local \'e a mesma para todos baricentros $\widehat{\sigma}^0$
(Figura \ref{figura88}).

Isto \'e, o \'indice  $I(v, \widehat{\sigma}^0) = +1$ (veja Figura \ref{figura28}) para todo baricentro $\widehat{\sigma}^0$. 


\begin{figure}[h!]
\begin{center}
\begin{tikzpicture}[scale=2]

\draw [dashed] (3,2) -- (4.8,3.8);
\draw (3,2) -- (5,2);
\draw [dashed] (3,2) -- (3.8,1.2);
\draw (3,2) -- (2.4,1); 
\draw [dashed] (3,2) -- (1,1.4);
\draw (3,2) -- (2,4);

\draw[-> ,very thick] (2.8,2.4) -- (2.65,2.7);
\draw[-> ,very thick] (2.8,2.8) -- (2.65,3.2);
\draw[-> ,very thick] (3,2.6) -- (3,3);
\draw[-> ,very thick] (3.2,2.6) -- (3.27,2.95);

\draw[-> ,very thick] (3.4,2.4) -- (3.6,2.6);
\draw[-> ,very thick] (3.8,2.6) -- (4.15,2.87);
\draw[-> ,very thick] (3.4,2.2) -- (3.75,2.4);
\draw[-> ,very thick] (3.8,2.2) -- (4.2,2.3);
\draw[-> ,very thick] (3.6,2) -- (4,2);

\draw[-> ,very thick] (3,1.6) -- (3,1.2);
\draw[-> ,very thick] (3.2,1.8) -- (3.4,1.6); 
\draw[-> ,very thick] (3.2,1.6) -- (3.4,1.3);
\draw[-> ,very thick] (3.6,1.6) -- (3.9,1.4);
\draw[-> ,very thick] (3.6,1.8) -- (4,1.63);

\draw[-> ,very thick] (2.8,1.8) -- (2.6,1.6);
\draw[-> ,very thick] (2.8,1.6) -- (2.6,1.3); 
\draw[-> ,very thick] (2.4,1.6) -- (2.1,1.4);
\draw[-> ,very thick] (2.4,1.8) -- (2,1.68);
\draw[-> ,very thick] (2.6,2) -- (2.2,2);

\draw[-> ,very thick] (2.6,2.2) -- (2.25,2.4);
\draw[-> ,very thick] (2.2,2.2) -- (1.8,2.3);
\draw[-> ,very thick] (2.4,2.6) -- (2.1,2.9);

\node at  (3.1,2.1) [above] {$\widehat{\sigma}^0$};

\end{tikzpicture}
\end{center}
\caption{O campo de Hopf em um baricentro $\widehat{\sigma}^0$.}\label{figura88}
\end{figure}

Da mesma maneira, para todo baricentro $\widehat{\sigma}^1$, temos a situa{\c c}\~ao local
da Figura \ref{figura20}. 


\begin{figure}[h!]
\begin{center}
\begin{tikzpicture}[scale=2]


\draw [dashed](3,2) -- (3.5,0.5);
\draw [dashed] (3,2) -- (2.2,3.6);
\draw [thick] (1.5,1.4) -- (4.5,2.6);

\draw[-> ,very thick] (2.8,2.4) -- (2.65,2.7);
\draw[-> ,very thick] (2.8,2.8) -- (2.6,3.2);
\draw[-> ,very thick] (3.2,2.4) -- (2.95,2.6);
\draw[-> ,very thick] (3,3) -- (2.8,3.4);

\draw[-> ,very thick] (3.4,2.6) -- (3.05,2.8);
\draw[-> ,very thick] (3.8,2.4) -- (3.4,2.35);
\draw[-> ,very thick] (3.9,2.56) -- (3.58,2.57);

\draw[-> ,very thick] (4.2,2) -- (3.9,1.87);
\draw[-> ,very thick] (3.6,1.2) -- (3.68,1);

\draw[-> ,very thick] (3.9,2.15) -- (3.6,2);
\draw[-> ,very thick] (3.8,1.95) -- (3.55,1.78);
\draw[-> ,very thick] (3.4,1.8) -- (3.3,1.6);
\draw[-> ,very thick] (3.4,1.4) -- (3.45,1.2);
\draw[-> ,very thick] (3.62,1.6) -- (3.55,1.4);

\draw[-> ,very thick] (3.12,1.6) -- (3.21,1.4);
\draw[-> ,very thick] (2.2,1.7) -- (2.5,1.8);
\draw[-> ,very thick] (2.2,1.6) -- (2.5,1.65);
\draw[-> ,very thick] (2.65,1.6) -- (2.82,1.5);

\draw[-> ,very thick] (2.28,1.42) -- (2.6,1.4);
\draw[-> ,very thick] (2.87,1.6) -- (3.04,1.4);
\draw[-> ,very thick] (2.8,1.2) -- (3.07,1);
\draw[-> ,very thick] (3.3,1.1) -- (3.4,0.8);

\draw[-> ,very thick] (1.8,1.9) -- (2.1,2.1);
\draw[-> ,very thick] (2,1.8) -- (2.37,1.97);
\draw[-> ,very thick] (2.52,1.95) -- (2.7,2.2);
\draw[-> ,very thick] (2.6,2.4) -- (2.52,2.7);
\draw[-> ,very thick] (2.3,2.2) -- (2.38,2.43);
\draw[-> ,very thick] (2.3,2.6) -- (2.27,2.93);

\draw[-> ,very thick] (2.56,2.9) -- (2.35,3.3);

\node at  (3.2,2) [above] {$\widehat{\sigma}^1$};
\node at  (4.4,2.6) [below] {${\sigma}^1$};

\end{tikzpicture}
\end{center}
\caption{O campo de Hopf em um baricentro $\widehat{\sigma}^1$.}\label{figura20}
\end{figure}


\begin{figure}[h!]
\begin{center}
\begin{tikzpicture}[scale=2]

\draw [dashed] (3,2) -- (4.8,3.8);
\draw [dashed] (3,2) -- (5,2);
\draw [dashed] (3,2) -- (3.8,1.2);
\draw [dashed] (3,2) -- (2.4,1); 
\draw [dashed] (3,2) -- (1,1.4);
\draw [dashed] (3,2) -- (2,4);

\draw[<- ,very thick] (2.8,2.4) -- (2.65,2.7);
\draw[<- ,very thick] (2.8,2.8) -- (2.65,3.2);
\draw[<- ,very thick] (3,2.6) -- (3,3);
\draw[<- ,very thick] (3.2,2.6) -- (3.27,2.95);

\draw[<- ,very thick] (3.4,2.4) -- (3.6,2.6);
\draw[<- ,very thick] (3.8,2.6) -- (4.15,2.87);
\draw[<- ,very thick] (3.4,2.2) -- (3.75,2.4);
\draw[<- ,very thick] (3.8,2.2) -- (4.2,2.3);
\draw[<- ,very thick] (3.6,2) -- (4,2);

\draw[<- ,very thick] (3,1.6) -- (3,1.2);
\draw[<- ,very thick] (3.2,1.8) -- (3.4,1.6); 
\draw[<- ,very thick] (3.2,1.6) -- (3.4,1.3);
\draw[<- ,very thick] (3.6,1.6) -- (3.9,1.4);
\draw[<- ,very thick] (3.6,1.8) -- (4,1.63);

\draw[<- ,very thick] (2.8,1.8) -- (2.6,1.6);
\draw[<- ,very thick] (2.8,1.6) -- (2.6,1.3); 
\draw[<- ,very thick] (2.4,1.6) -- (2.1,1.4);
\draw[<- ,very thick] (2.4,1.8) -- (2,1.68);
\draw[<- ,very thick] (2.6,2) -- (2.2,2);

\draw[<- ,very thick] (2.6,2.2) -- (2.25,2.4);
\draw[<- ,very thick] (2.2,2.2) -- (1.8,2.3);
\draw[<- ,very thick] (2.4,2.6) -- (2.1,2.9);

\node at  (3.1,2.1) [above] {$\widehat{\sigma}^2$};


\end{tikzpicture}
\end{center}
\caption{O campo de Hopf em um baricentro $\widehat{\sigma}^2$.}\label{figura21}
\end{figure}

Isto \'e, o \'indice $I(v, \widehat{\sigma}^2) = +1$ (campo radial entrando, veja Figura \ref{figura28}) para todo baricentro $\widehat{\sigma}^2$.

Agora, vamos calcular a soma 
$$\sum_{a_i} I(v, a_i)$$
para todos os pontos singulares $a_i$ do campo de vetores $v$. Os pontos singulares s\~ao: 

+ os baricentros $\widehat{\sigma}^0_i$ dos v\'ertices $\sigma^0_i$,

+ os baricentros $\widehat{\sigma}^1_j$ dos segmentos $\sigma^1_j$,

+ os baricentros $\widehat{\sigma}^2_k$ dos tri\^angulos $\sigma^2_k$.

Ent\~ao, tem-se 
$$\sum_{a_i} I(v, a_i) = \sum_{{\sigma}^0_i} I(v, \widehat{\sigma}^0_i) + \sum_{{\sigma}^1_j} I(v, \widehat{\sigma}^1_j) + \sum_{{\sigma}^2_k} I(v, \widehat{\sigma}^2_k).$$
Mas vemos que, para todo $\sigma^0_i$, temos $I(v, \widehat{\sigma}^0_i) = 1$, ent\~ao $\sum_{\sigma^0_i} I(v, \widehat{\sigma}^0_i)$ \'e o n\'umero $n_0$ dos v\'ertices. \\
Da mesma maneira, para todo  $\sigma^1_j$, temos $I(v, \widehat{\sigma}^1_j) = -1$, ent\~ao $\sum_{\sigma^1_j} I(v, \widehat{\sigma}^1_j)$ \'e igual a $-n_1$, onde $n_1$ \'e 
o n\'umero dos segmentos. Enfim, para todos $\sigma^2_k$, temos $I(v, \widehat{\sigma}^2_k) = +1$, ent\~ao $\sum_{\sigma^2_k} I(v, \widehat{\sigma}^2_k)$ \'e igual ao  
 n\'umero $n_2$ dos tri\^angulos. \\
Ent\~ao, temos
$$\sum_{a_i} I(v, a_i) = n_0 - n_1 + n_2 = \chi(\Su),$$
onde $a_i$ s\~ao todos os pontos singulares do campo de vetores $v$. 

\bigskip
\subsubsection{Segunda etapa}\hfill

A segunda etapa \'e a mais delicada e h\'a v\'arias maneiras de trabalhar. Vamos providenciar as  ideias 
de Heinz Hopf \cite{Ho1,Ho2}, revisitadas por John Milnor \cite{Mi} e Marie-H\'el\`ene Schwartz \cite{Sc1}. 

Sendo um  campo $v$ de vetores tangentes a $\Su$, com singularidades isoladas $a_i$, vamos estender o campo em uma vizinhan\c ca $X_\varepsilon$ de $\Su$, com as mesmas 
singularidades e os mesmos \'indices. Isto \'e chamado de {\it prolongamento radial}. 
A vizinhan\c ca $X_\varepsilon$ tem borda $\partial X_\varepsilon$ sobre a qual \'e definida a aplica\c c\~ao de Gauss 
$$n : \partial X_\varepsilon \to \Sp^2$$
que associa a cada ponto $y\in \partial X_\varepsilon$ o vetor normal unit\'ario 
$n(y)$ saindo de $X_\varepsilon$ no ponto $y$.
A segunda etapa ser\'a conclu\'ida mostrando que se a soma dos \'indices do campo $v$ \'e igual ao grau 
da aplica\c c\~ao de Gauss, ent\~ao esta soma \'e independente do campo $v$ e do n\'umero de pontos singulares de $v$. 

O m\'etodo de prolongamento radial apareceu pela primeira vez, e ao mesmo tempo, no livro de John Milnor,
``Topology from the differential viewpoint'',  e na constru\c c\~ao de classes caracter\'isticas de variedades 
singulares de Marie-H\'el\`ene Schwartz. Milnor providenciou esta constru\c c\~ao no caso de campos de vetores 
com singularidades n\~ao degeneradas, isto \'e, de \'indice $+1$ ou $-1$, sobre variedades suaves.
Ele n\~ao percebeu que sua constru\c c\~ao vale para uma singularidade isolada de qualquer \'indice. 
Marie-H\'el\`ene Schwartz providenciou a constru\c c\~ao no caso de singularidades isoladas de qualquer \'indice, e sobre variedades singulares, o que \'e muito mais geral e mais delicada. 
\medskip

Agora, vamos \`a constru\c c\~ao. 
Em  primeiro lugar, vamos construir o prolongamento radial de um campo de vetores tangentes
a $\Su$ com singularidades isoladas nos pontos $a_i$. 


Chamamos $X_\varepsilon$ a vizinhan\c ca fechada de $\mathcal{S}$, de ``tamanho'' $\varepsilon$, isto \'e,  o conjunto de todos os pontos $x\in \R^3$ tais que $\Vert x-y \Vert \le \varepsilon$ para 
um ponto $y\in \Su$. Para $\varepsilon$ suficientemente pequeno, $X_\varepsilon$ \'e uma variedade suave de dimens\~ao 3 em $\R^3$, com borda
$\partial X_\varepsilon$.  

O campo de vetores radiais, dentro de $X_\varepsilon$ \'e constru\'ido da  segunite  maneira: para $x\in X_\varepsilon$, seja $r(x)$ 
o ponto de $\Su$ mais pr\'oximo de $x$. 
O vetor $t(x)=x-r(x)$ (em $\R^3$) \'e ortogonal ao espa\c co tangente a $\Su$ no ponto $r(x)$, sen\~ao, $r(x)$ n\~ao seria o ponto de $\Su$ mais pr\'oximo de $x$. 
O vetor $t(x)$ \'e chamado de {\it transversal}.

\begin{center}
\begin{figure}[h!]
\begin{tikzpicture}[scale=0.8]
\draw  (0,0) -- (0,3) -- (1,4) -- (4,4) -- (4,1) -- (3, 0) --(0,0);
\draw[dotted, thick] (0,0) --(1,1) -- (4,1);
\draw[dotted, thick] (1, 1) -- (1, 4);
\draw (0,3) -- (3,3) -- (4,4);
\draw (3,3) -- (3,0); 

\draw[dotted, thick] (4,1) -- (4,-0.6);
\draw[dotted, thick] (0,0) -- (0,-0.6);
\draw[dotted, thick] (3,0) -- (3,-0.6);
\draw (0,-0.6) -- (0,-3) -- (3, -3) -- (3,-0.6);
\draw (3, -3) -- (4, -2) -- (4,-0.6);
\draw (4,2) -- (6.7,2) -- (4.2,-0.6) -- (-2,-0.6) -- (0,1.5);

\node at (-1.3, -0.3) {$\Su$};

\draw[dotted, thick] (1.6, 0.5) -- (1.6, 2.3);
\node at (1.6, 0.5) {$\bullet$};
\node at (1.6, 0.65)[below] {$r(x)$};
\node at (1.6, 2.3) {$\bullet$};
\node at (1.6, 2.3)[left] {$x$};

\fill[fill=blue!20] (0,3) -- (1,4) -- (4,4) -- (3,3) -- (0,3);
\node at (1, 3.4) {$\partial X_\varepsilon$};
\node at (2.5, 1.7) {$X_\varepsilon$};

\draw[->] (2.7 ,3.5) -- (2.7, 4.5);
\node at (2.7, 3.5) {$\bullet$};
\node at (2.7, 3.5)[below] {$y$};
\node at (2.75, 4.5)[left] {$n(y)$};
\end{tikzpicture}
\caption{A vizinhan\c ca $X_\varepsilon$.}\label{vizi2}
\end{figure}
\end{center}


Para $\varepsilon$ suficientemente pequeno, a fun\c c\~ao $r:X_\varepsilon \to \Su$ \'e suave e bem definida. 
Seja $\varphi$ a fun\c c\~ao 
$$\varphi (x) = \Vert x - r(x) \Vert^2,$$
onde ``$\Vert \; \Vert$'' \'e a norma euclidiana em $\R^3$. 
O gradiente de $\varphi$ \'e dado por 
$${\rm grad} \varphi = 2 (x - r(x) ).$$
Em cada ponto $y$ da borda $\partial X_\varepsilon = \varphi^{-1}(\varepsilon^2)$, o vetor normal unit\'ario, saindo de $X_\varepsilon$ \'e dado por 
$$n(y) = \frac{{\rm grad} \varphi} { \Vert {\rm grad} \varphi \Vert} = \frac{(y - r(y) )}{\varepsilon}.$$

Estendemos o campo de vetores $v$ (definido sobre $\Su$) em um campo de vetores $w$ definido sobre $X_\varepsilon$ da seguinte maneira:
$$w(x) = (x - r(x) ) + v(r(x)),$$
ou seja 
$$w(x) = t(x) + v(r(x)).$$
 Lembramos que  $t(x)= x - r(x)$ \'e chamado de campo de vetores {\it transversal}, ele \'e independente de $v$ e 
\'e transversal ao plano tangente a $\Su$. Chamamos 
$v(r(x))$ de campo de vetores {\it paralelos}. Ele \'e  paralelo ao plano tangente a $\Su$ (veja Figura \ref{vizi1}).

Quais s\~ao as propriedades do campo de vetores $w$?

Por um lado, para $y\in \partial X_\varepsilon$, o  produto escalar $w(y). n(y)$ vale $\varepsilon >0$, 
ent\~ao o campo de vetores $w$ sai de $X_\varepsilon$ ao longo  da borda $\partial X_\varepsilon$. Por outro lado, $w$ se anula nos mesmos pontos que $v$. De fato, 
no ponto $x\in X_\varepsilon$, os vetores $(x - r(x) )$ e $ v(r(x))$ s\~ao ortogonais. 

Seja $a$ uma singularidade do campo $v$, ao longo da ``fibra''  
$r^{-1}(a) = \{ x\in X_\varepsilon : r(x)=a \}$ da vizinhan\c ca $X_\varepsilon$, o campo $w$ vale $(x - a )$, 
o que \'e um campo de vetores 
com singularidade isolada no ponto $a$, com \'indice $+1$. 
Assim que demostrado em \cite{Sc2},  o campo $w$ \'e   a 
soma de dois campos de vetores que s\~ao ortogonais: 
o campo $t(x)$ que \'e transversal, nulo ao longo de $X_\varepsilon$ e o campo  paralelo
$v(r(x))$, nulo em todos pontos $z$ tais que $r(z)=a$. Ent\~ao $w$  admite as mesmas singularidades que $v(x)$.  

Como o campo de vetores $t$  sai radialmente do ponto $a$ na fibra $r^{-1}(a)$, 
o Lema \ref{extension} mostra que o \'indice do campo de vetores $w$ 
definido em $X_\varepsilon$, no ponto singular $a$ 
\'e igual ao \'indice do campo de vetores $v$ definido na superf\'icie $\Su$  no 
mesmo ponto singular $a$. Podemos escrever isso como
$$I(w,a;X_\varepsilon) = I(v,a;\Su).$$

\begin{center}
\begin{figure}[h!]
\begin{tikzpicture}
\draw  (-6,-2) -- (-4,2) -- (6,2) -- (4,-2)-- (-6,-2);

\draw (0,0) ellipse (3cm and 1cm);

\node at (0, 0) {$\bullet$};
\node at (0, 0)[left] {$a$};

\node at (3, 0) {$\bullet$};
\node at (3, 0)[left] {$r(x)$};
\draw[->] (3,0) -- (4,0);
\node at (4, 0)[below] {$v(r(x))$};

\node at (3, 3) {$\bullet$};
\node at (3, 3) [left]{$x$};
\draw[->] (3,3) -- (3.5,4);
\node at (3.5, 4)[above] {$t(x)$};
\draw[->] (3,3) -- (4,3);
\node at (4, 3)[right] {$v(r(x))$};
\draw[->] (3,3) -- (4.5,4);
\node at (4.5,4)[right] {$w(x)$};

\draw[dotted, thick] (3,0) -- (3,3);


\draw  (0cm,0cm) arc (0: -30: -9);
\draw[dotted, thick]  (0cm,0cm) arc (0: 30: -4);
\draw  (0.55cm,-2cm) arc (30: -15: 3);


\draw[->](-3,0) -- (-2.7, 0);
\draw[->](-2.5,-0.5527) -- (-2.5,-0.3527);
\draw[->](-2.5, 0.5527) -- (-2.5,0.3527);
\draw[->](2.5,-0.5527) -- (2.8,-0.5527);
\draw[->](2.5,0.5527) -- (2.75,0.7);

\draw[->](-2,-0.745356) -- (-1.7,-0.7);
\draw[->](-2,0.745356) -- (-1.75,0.55);
\draw[->](2,-0.745356) -- (1.75, -0.5);
\draw[->](2,0.745356) -- (2.25, 0.5);

\draw[->](-1.5,-0.866) -- (-1.25, -0.7);
\draw[->](1.5,-0.866) -- (1.25, -0.6);
\draw[->](-1.5, 0.866) -- (-1.25, 1.1);
\draw[->](1.5,0.866) -- (1.25, 0.7);

\draw[->](-1,-0.9428) -- (-1.25, -1.1);
\draw[->](-1, 0.9428) -- (-1.2,  1.1);
\draw[->](1,-0.9428) -- (0.75, -0.7);
\draw[->](1, 0.9428) -- (0.75, 0.7);

\draw[->](-0.5,-0.986) -- (-0.25, -0.7);
\draw[->](-0.5,0.986) -- (-0.25, 0.7);
\draw[->](0.5,-0.986) -- (0.25, -1.2);
\draw[->](0.5,0.986) -- (0.25, 0.7);

\draw[->](0,-1) -- (-0.25, -1.25);


\node at (0.6, 3.2) {$\bullet$};
\draw [->] (0.6, 3.2) -- (0.8, 4);
\node at (0.5, 3.2)[left] {$z$};
\node at (0.7, 4)[above] {$t(z)$};
\draw [->] (0.15, 1.7) -- (0.25, 2.5);

\draw [->] (0.95, -3.4) -- (1, -4.2);
\node at (1, -4.2)[right] {$r^{-1}(a)$};
\draw [->] (0.8, -2.5) -- (1.1, -3.5);

\end{tikzpicture}
\caption{O campo de vetores 
$w(x)$ \'e soma do campo transversal $t(x)$ e do 
campo  paralelo $v(r(x))$.}\label{vizi1}
\end{figure}
\end{center}

Agora, temos um campo de vetores $w$ definido sobre $X_\varepsilon$ satisfazendo as seguintes  propriedades: $w$ tem as mesmas singularidades que $v$ e  os mesmos \'indices nos pontos singulares, e $w$ est\'a saindo de $X_\varepsilon$ ao longo da borda $\partial X_\varepsilon$. 

Para concluir a segunda etapa, vamos usar (e mostrar) o seguinte  resultado  de Hopf: se 
$$w : X_\varepsilon \to \R^3$$
 \'e um campo de vetores suave com singularidades isoladas, saindo de $X_\varepsilon$ ao longo da borda $\partial X_\varepsilon$, ent\~ao a soma $\sum_{a_i} I(w,a_i;X_\varepsilon)$ \'e igual ao grau da 
aplica\c c\~ao de Gauss associada a $n : \partial X_\varepsilon \to \Sp^2$. Isso implica  que a soma 
$\sum_{a_i} I(w,a_i;X_\varepsilon)$ n\~ao depende do campo $w$, e como consequ\^encia, a soma 
$\sum_{a_i}I(v,a_i)$ n\~ao depende do campo $v$ tangente a $\Su$ com singularidades isoladas.

O resultado de Hopf pode ser visto do seguinte jeito: Considaremos uma pequena bola $B_\eta(a_i)$ 
ao redor de cada ponto $a_i$ em $X_\varepsilon$. O raio $\eta<< \varepsilon$ \'e suficientemente pequeno para que
as bolas  estejam no interior de $X_\varepsilon$ e n\~ao se  interseccionam. A variedade 
$$X_\varepsilon \setminus \bigcup_i B_\eta(a_i)$$
\'e uma variedade suave com borda  
$$\partial \left[ X_\varepsilon \setminus \bigcup_i B_\eta(a_i) \right] = 
\partial X_\varepsilon \bigcup - \left[ \bigcup_i \partial B_\eta(a_i) \right],$$
onde sinal ``$-$'' vem do que a orienta\c c\~ao de $\partial B_\eta(a_i) $ como 
borda da bola $B_\eta(a_i) $ \'e a orienta\c c\~ao oposta de $\partial B_\eta(a_i) $ como 
elemento da borda de $X_\varepsilon \setminus \bigcup_i B_\eta(a_i)$. 

Agora, a aplica\c c\~ao  de Gauss
$$
\gamma_w : \partial \left[ X_\varepsilon \setminus \bigcup_i B_\eta(a_i) \right] \to \Sp^2, \quad 
\text{associada a }  \widetilde w(x) = \frac{w(x)} {\Vert w(x) \Vert}, $$
que \'e bem definida sobre a borda de $X_\varepsilon \setminus \bigcup_i B_\eta(a_i) $, 
se estende a uma aplica\c c\~ao 
$$\gamma_w: 
X_\varepsilon \setminus \bigcup_i B_\eta(a_i) \to \Sp^2$$
sem nenhuma singularidade. Com efeito, todas  as singularidades de $\widetilde w$ est\~ao nas bolas 
$B_\eta(a_i)$. 

Sabemos que se um campo de vetores $v$, definido sobre a borda de uma bola $B(a)$, 
se estende sem singularidades dentro de $B(a)$, ent\~ao o \'indice $I(v,a)$ (o seu grau) vale $0$
(veja Observa\c c\~ao \ref{remarque}). 
Isso \'e  um caso particular de uma 
 propriedade importante do grau: se uma aplica\c c\~ao $f : \partial Y \to \Sp^2$ definida sobre a borda de uma variedade $Y$ de dimens\~ao 3 em $\R^3$, se estende sem singularidade no interior de $Y$, ent\~ao 
${\rm grau} (f) =0$. 

Como $\gamma_w$ se estende sem singularidade  a uma aplica\c c\~ao $\gamma_w: 
X_\varepsilon \setminus \bigcup_i B_\eta(a_i) \to \Sp^2$, 
isso implica que o grau de $\gamma_w$ sobre a borda de 
$X_\varepsilon \setminus \bigcup_i B_\eta(a_i)$ vale $0$.
Em outras palavras, 
a soma dos graus de $\gamma_w$ sobre  os componentes da borda de 
$X_\varepsilon \setminus \bigcup_i B_\eta(a_i)$ vale $0$. Isto \'e  
$${\rm grau}(\gamma_w; \partial X_\varepsilon) -  \sum_i {\rm grau}(\gamma_w;\partial B_\eta(a_i)) =0,$$
onde n\~ao esquecemos do sinal ``$-$'' por causa de orienta\c c\~ao. 

Por um lado, $\widetilde w$ est\'a saindo de $X_\varepsilon$ ao longo da sua borda e $\widetilde w$ pode
ser deformado continuamente no campo de vetores $n$  normais, saindo de $X_\varepsilon$ 
ao longo da sua borda. A deforma\c c\~ao (ou homotopia), sendo cont\'inua e com vetores saindo de 
$X_\varepsilon$ ao longo da sua borda,  faz com que o grau, que \'e um n\'umero inteiro, permane\c ca o mesmo, 
o que implica que ${\rm grau}(\gamma_w; \partial X_\varepsilon) = {\rm grau}(n)$. 

Por outro lado, pela defini\c c\~ao do \'indice, ${\rm grau}(\gamma_w;\partial B_\eta(a_i)) = I(w,a_i;X_\varepsilon) = I(v,a_i)$. Finalmente temos 
$$\sum_i I(v,a_i) =  {\rm grau}(n),$$
o que n\~ao depende do campo de vetores $v$ com singularidades isoladas sobre $\Su$. 
Assim, conclu\'imos a prova no caso orient\'avel.

\bigskip

\subsection{Caso n\~ao orient\'avel}

No caso de superf\'icies n\~ao orient\'aveis, considaremos o cobrimento duplo e  orientado. 
Isto \'e, em um ponto $x$ de  uma superf\'icie orientada $\widehat \Su$, consideramos o conjunto de pares $(x,{o}_x)$,  
onde ${o}_x$ \'e uma orienta\c c\~ao local de $\Su$ em $x$. S\~ao duas orienta\c c\~oes locais poss\'iveis, ent\~ao temos uma aplica\c c\~ao $\pi : \widehat \Su \to \Su$, tal que 
$\pi^{-1} (x)$ cont\'em dois pontos (correspondente \`as duas orienta\c c\~oes). 
O cobrimento duplo orientado $\pi : \widehat \Su \to \Su$ \'e um homeomorfismo local.

Por exemplo, no caso do espa\c co projetivo $\PP^2$, o cobrimento duplo orientado \'e a proje\c c\~ao 
can\^onica 
$\pi : \Sp^2 \to \PP^2$. No caso da  garrafa de Klein $K^2$, o cobrimento duplo orientado \'e 
a proje\c c\~ao do toro $\pi : \T^2 \to K^2$. No caso de uma superf\'icie orientada, 
a proje\c c\~ao $\pi : \widehat \Su \to \Su$ \'e isomorfa a $\pi : \Su \times \Z/2 \to \Su$, isto \'e 
$\widehat \Su$ n\~ao \'e conexa e \'e a uni\~ao de dois exemplares de $\Su$. 

Agora, seja $\Su$ uma superf\'icie compacta suave e n\~ao orientada. Por um lado, se $v$ \'e um campo cont\'inuo de vetores sobre $\Su$ 
com singularidades isoladas $a_i$ de \'indice $I(v;a_i)$, ent\~ao, podemos definir 
um levantamento $\hat v$ de $v$ que \'e um  campo cont\'inua de vetores sobre  $\widehat \Su$ 
com pontos singulares isoladas  $a_i^j$ com  $j=1,2$, tais que $\pi (a_i^j) = a_i$. 
Como $\pi$ \'e um homeomorfismo local, temos 
$I(v;a_i^j)= I(v;a_i)$ para cada  $j=1,2$. Ent\~ao $\sum_{i,j}I(\hat v;a_i^j) = 2 \sum_i I(v;a_i)$. 

Por outro lado, podemos definir uma triangula\c c\~ao de $\widehat \Su$ tal que os simplexos de 
$\widehat \Su$ s\~ao imagens inversas dos simplexos de $\Su$. A imagem 
inversa de cada um dos simplexos de $\Su$ consiste em dois simplexos de $\widehat \Su$ de 
mesma dimens\~ao. Os n\'umeros de simplexos $n_0, n_1$ e $n_2$ de $\widehat S$ s\~ao
dobro dos n\'umeros correspondentes de $\Su$. Ent\~ao temos 
$\chi(\widehat \Su) = 2 \chi (\Su)$.

Como $\widehat \Su$ \'e uma superf\'icie compacta suave e orientada, sabemos, 
pela prova no caso orient\'avel,  que 
$\chi(\widehat \Su) = \sum_{i,j}I(\hat v;a_i^j)$. 

Assim obtemos o Teorema de Poincar\'e-Hopf para  superf\'icies n\~ao orient\'aveis:
$$\chi (\Su) = 1/2\cdot \chi(\widehat \Su) = 1/2\; \sum_{i,j}I(v;a_i^j) = \sum_i I(v;a_i).$$

\subsection{Conclus\~oes}\hfill

Ao procurarmos por ``Poincar\'e-Hopf Theorem'' em um site da internet, vemos  v\'arias aplica{\c c}\~oes em mec\^anica, 
f\'isica, qu\'imica, ci\^encias exatas,  economica, etc. 

A raz\~ao mais importante \'e que temos:
\begin{theorem}
{\it Uma variedade $\Su$ suave, compacta  e sem borda admite um campo de vetores tangentes sem singularidades se, e somente se, $\chi(\Su)=0$.}
\end{theorem}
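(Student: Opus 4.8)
O plano \'e provar as duas implica\c c\~oes separadamente, sendo a dire\c c\~ao ``somente se'' imediata e a dire\c c\~ao ``se'' a parte delicada; suponho $\Su$ conexa (caso contr\'ario o enunciado deve ser entendido componente a componente). Primeiro, se $\Su$ admite um campo de vetores tangentes $v$ \emph{sem} singularidades, ent\~ao o conjunto de pontos singulares \'e vazio e a soma $\sum_{a_i} I(v,a_i)$ \'e uma soma vazia, logo nula. Pelo Teorema de Poincar\'e-Hopf, $\chi(\Su) = \sum_{a_i} I(v,a_i) = 0$. Isso encerra uma das dire\c c\~oes.

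Para a rec\'iproca, suponho $\chi(\Su)=0$ e parto de um campo de vetores $v$ tangente a $\Su$ com singularidades isoladas $a_1,\dots,a_k$ (um tal campo existe, por exemplo o campo de Hopf constru\'ido na primeira etapa). Pelo Teorema de Poincar\'e-Hopf, $\sum_i I(v,a_i)=\chi(\Su)=0$. A ideia \'e modificar $v$ apenas perto dos pontos singulares, para elimin\'a-los todos de uma vez. Como $\Su$ \'e conexa, existe um disco mergulhado $D\subset \Su$ (homeomorfo a um disco de $\R^2$) cujo interior cont\'em todos os pontos $a_i$. Como $D$ \'e contr\'atil, o fibrado tangente $T\Su\vert_D$ \'e trivial: escolho uma trivializa\c c\~ao $T\Su\vert_D\cong D\times\R^2$, que permite comparar os vetores $v(x)$ em pontos distintos de $D$ e, em particular, considerar a aplica\c c\~ao de Gauss $\gamma_v:\partial D\to\Sp^1$ obtida normalizando $v$ ao longo de $\partial D$ (onde $v$ n\~ao se anula).

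O passo t\'ecnico central \'e mostrar que o grau de $\gamma_v:\partial D\to\Sp^1$ \'e igual \`a soma dos \'indices das singularidades contidas em $D$, ou seja, vale $0$. Isso seguir\'a do mesmo argumento de excis\~ao usado na segunda etapa do caso orient\'avel: removendo pequenos discos $B_\eta(a_i)$ em torno de cada $a_i$, o campo normalizado $\widetilde v$ n\~ao tem singularidades sobre $D\setminus\bigcup_i B_\eta(a_i)$ e portanto define uma aplica\c c\~ao para $\Sp^1$ sem singularidades nesse dom\'inio; como na Observa\c c\~ao \ref{remarque} (na sua vers\~ao bidimensional, com alvo $\Sp^1$), a soma dos graus sobre as componentes da borda \'e nula, isto \'e,
$$
{\rm grau}(\gamma_v;\partial D) - \sum_i {\rm grau}(\gamma_v;\partial B_\eta(a_i)) = 0.
$$
Como ${\rm grau}(\gamma_v;\partial B_\eta(a_i)) = I(v,a_i)$ por defini\c c\~ao de \'indice, conclui-se que ${\rm grau}(\gamma_v;\partial D) = \sum_i I(v,a_i) = 0$.

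Finalmente, uso a rec\'iproca da Observa\c c\~ao \ref{remarque}: uma aplica\c c\~ao suave $\gamma_v:\partial D\cong\Sp^1\to\Sp^1$ de grau $0$ \'e homot\'opica a uma constante e, portanto, estende-se a uma aplica\c c\~ao $\bar\gamma:D\to\Sp^1$ (sem zeros). Via a trivializa\c c\~ao, $\bar\gamma$ corresponde a um campo de vetores tangentes $v'$ definido sobre $D$, sem singularidades e coincidindo com $v$ ao longo de $\partial D$. Defino ent\~ao um novo campo global pondo $v'$ dentro de $D$ e $v$ fora de $D$; ele \'e cont\'inuo (as defini\c c\~oes coincidem em $\partial D$) e n\~ao se anula em ponto algum de $\Su$. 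Logo $\Su$ admite um campo de vetores tangentes sem singularidades. A parte mais delicada do argumento \'e garantir que todas as singularidades podem ser confinadas num \'unico disco e estabelecer a aditividade dos \'indices (o grau sobre $\partial D$ igual \`a soma dos \'indices internos); a extens\~ao da aplica\c c\~ao de grau $0$ \'e elementar.
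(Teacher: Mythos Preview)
O artigo n\~ao fornece uma demonstra\c c\~ao deste teorema: ele \'e apenas enunciado na se\c c\~ao de conclus\~oes como consequ\^encia do Teorema de Poincar\'e--Hopf, sem argumento para a implica\c c\~ao rec\'iproca. A sua proposta \'e, portanto, a \'unica prova em jogo, e ela segue o caminho cl\'assico (concentrar todas as singularidades num disco, mostrar que a aplica\c c\~ao de Gauss na fronteira tem grau zero pela aditividade dos \'indices, e usar que uma aplica\c c\~ao $\Sp^1\to\Sp^1$ de grau zero se estende ao disco). O esquema est\'a correto e bem organizado.

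H\'a um pequeno reparo t\'ecnico na colagem final. A extens\~ao $\bar\gamma:D\to\Sp^1$ toma valores em $\Sp^1$, logo o campo $v'$ correspondente \'e \emph{unit\'ario}; sobre $\partial D$ ele coincide com $\gamma_v = v/\Vert v\Vert$, n\~ao com $v$. Assim, colar $v'$ por dentro e $v$ por fora n\~ao d\'a um campo cont\'inuo, a menos que voc\^e corrija as normas. A corre\c c\~ao \'e elementar: escolha uma fun\c c\~ao cont\'inua positiva $\rho:D\to(0,\infty)$ com $\rho\vert_{\partial D}=\Vert v\Vert$ (por exemplo via extens\~ao radial), e substitua $v'$ por $\rho\cdot v'$; o campo resultante coincide com $v$ em $\partial D$ e continua sem zeros. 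Vale tamb\'em deixar expl\'icito por que um n\'umero finito de pontos numa superf\'icie conexa pode ser englobado por um \'unico disco mergulhado (ligue-os por um arco simples e engrosse-o numa vizinhan\c ca tubular), j\'a que voc\^e mesmo aponta esse passo como delicado. Feitos esses ajustes, a prova fica completa.
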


\begin{corollary}\label{cheveux}
\'E imposs\'ivel construir um campo de vetores tangentes \`a esfera $\Sp^2$, sem singularidades. 
O mesmo resultado vale para o plano projetivo $\PP^2$. 
\end{corollary}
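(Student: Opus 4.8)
O plano \'e deduzir o corol\'ario diretamente do Teorema de Poincar\'e-Hopf, sem recorrer \`a constru\c c\~ao expl\'icita de campos (ou seja, sem usar a dire\c c\~ao dif\'icil do Teorema ``se e somente se'' enunciado logo acima). Primeiro, observo que tanto a esfera $\Sp^2$ quanto o plano projetivo $\PP^2$ s\~ao superf\'icies suaves, compactas e sem borda, e que um campo de vetores sem nenhuma singularidade possui, trivialmente, apenas singularidades isoladas (o conjunto singular \'e vazio). Logo o Teorema de Poincar\'e-Hopf se aplica a tal campo.

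Em seguida, argumento por contradi\c c\~ao. Suponho que exista um campo de vetores tangentes $v$ sobre $\Sp^2$ sem nenhuma singularidade. Como o conjunto dos pontos singulares \'e vazio, a soma $\sum_{a_i} I(v,a_i)$ \'e uma soma vazia e vale $0$. Por outro lado, o Teorema de Poincar\'e-Hopf d\'a $\sum_{a_i} I(v,a_i) = \chi(\Sp^2)$. Como j\'a vimos que $\chi(\Sp^2)=2\ne 0$, obtenho $0=2$, uma contradi\c c\~ao. Portanto n\~ao existe campo de vetores tangentes \`a esfera sem singularidades.

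Para o plano projetivo o argumento \'e id\^entico: um campo tangente sem singularidades forneceria soma de \'indices nula, ao passo que $\chi(\PP^2)=+1\ne 0$, novamente uma contradi\c c\~ao. O mesmo esquema mostra, de fato, que qualquer superf\'icie compacta sem borda com $\chi(\Su)\ne 0$ n\~ao admite campo tangente sem singularidades.

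N\~ao antecipo obst\'aculo real: todo o conte\'udo t\'ecnico j\'a foi estabelecido no Teorema de Poincar\'e-Hopf e nos c\'alculos $\chi(\Sp^2)=2$ e $\chi(\PP^2)=1$. O \'unico ponto a enunciar com cuidado \'e a conven\c c\~ao de que a soma de \'indices sobre um conjunto vazio de singularidades \'e igual a zero; \'e exatamente o contraste entre esse $0$ e uma caracter\'istica de Euler-Poincar\'e n\~ao nula que produz a impossibilidade.
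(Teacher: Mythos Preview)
Your proposal is correct and is essentially the same as the paper's approach: the corollary is stated immediately after the theorem that a compact boundaryless surface admits a nowhere-vanishing tangent field if and only if $\chi(\Su)=0$, and the ``only if'' direction of that theorem is exactly the Poincar\'e--Hopf argument you spell out (empty sum of indices equals $0$, yet $\chi(\Sp^2)=2$ and $\chi(\PP^2)=1$). You simply bypass the intermediate statement and invoke Poincar\'e--Hopf directly, which is fine.
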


A consequ\^encia deste resultado \'e que o fibrado tangente \`a esfera $\Sp^2$ n\~ao \'e trivial. 
O mesmo reultado vale para todas esferas de dimens\~ao par.

\end{document}